\theoremstyle{plain}
\newtheorem{theorem}{Theorem}[section]
\newtheorem{proposition}[theorem]{Proposition}
\newtheorem{corollary}[theorem]{Corollary}
\theoremstyle{definition}
\newtheorem{remark}[theorem]{Remark}
\newtheorem{definition}[theorem]{Definition}
\numberwithin{equation}{section}
\newcommand{\cA}{\mathcal{A}}
\newcommand{\cB}{\mathcal{B}}
\newcommand{\cD}{\mathcal{D}}
\newcommand{\cE}{\mathcal{E}}
\newcommand{\cG}{\mathcal{G}}
\newcommand{\cL}{\mathcal{L}}
\newcommand{\cS}{\mathcal{S}}
\newcommand{\bC}{\mathbb{C}}
\newcommand{\bE}{\mathbb{E}}
\newcommand{\bM}{\mathbb{M}}
\newcommand{\bR}{\mathbb{R}}
\newcommand{\bS}{\mathbb{S}}
\newcommand{\sE}{\mathsf{E}}
\newcommand{\sM}{\mathsf{M}}
\newcommand{\sT}{\mathsf{T}}
\begin{document}
\title[Magnetoviscoelastic fluids]{Well-posedness for magnetoviscoelastic fluids in 3D}

\author{Hengrong Du}
\address{Department of Mathematics\\
        Vanderbilt University\\
        Nashville, Tennessee\\
        USA}
\email{hengrong.du@vanderbilt.edu}

\author{Yuanzhen Shao}
\address{The University of Alabama\\ 
	Tuscaloosa, Alabama \\
	USA}
\email{yshao8@ua.edu}

\author{Gieri Simonett}
\address{Department of Mathematics\\
        Vanderbilt University\\
        Nashville, Tennessee\\
        USA}
\email{gieri.simonett@vanderbilt.edu}

\thanks{This work was supported by a grant from the Simons Foundation (\#426729 and \#853237, Gieri Simonett).}

\subjclass[2020]{Primary: 35Q35, 35Q74, 35K59, 35B40. Secondary: 76D03, 76A10.}

% 35A01  Existence problems for PDEs: global existence, local existence, non-existence
% 35Q30 Navier-Stokes equations
% 35Q35 PDEs in connection with fluid mechanics 
% 35B65 Smoothness and regularity of solutions to PDEs
% 35B  Qualitative properties of solutions to partial differential equations
% 35B40 Asymptotic behavior of solutions to PDEs 
% 35B65  Smoothness and regularity of solutions to PDEs
% 35D35 Strong solutions to PDEs 
% 35K59  Quasilinear parabolic equations
% 35Q  Partial differential equations of mathematical physics and other areas of application 
% 35Q60   	PDEs in connection with optics and electromagnetic theory
% 35Q74 PDEs in connection with mechanics of deformable solids

% 76A10 Viscoelastic fluids 
% 76D03  Existence, uniqueness, and regularity theory for incompressible viscous fluids
% 76D05 Navier-Stokes equations for incompressible viscous fluids
% 74F15  Electromagnetic effects in solid mechanics 
% 76W05  Magnetohydrodynamics and electrohydrodynamics 

\keywords{Landau-Lifshitz-Gilbert system,
quasilinear parabolic equation, strong well-posedness, normally stable, Lyapunov function, convergence to equilibria.}

\begin{abstract}
We show that the system of equations describing a magnetoviscoelastic fluid
in three dimensions can be cast as a quasilinear parabolic system.
 Using the theory of maximal $L_p$-regularity, we 
establish existence and uniqueness of local strong solutions
and we show that each solution is smooth (in fact analytic) in space and time. 
Moreover, we give a complete characterization of the set of equilibria and show 
that solutions that start out close to a   constant  equilibrium exist globally and converge to a (possibly different) 
  constant  equilibrium.
Finally, we show that every solution that is eventually bounded in the topology of the state space 
exists globally and converges to the   set of equilibria. 
%exponentially fast.
\end{abstract}

\maketitle

%\tableofcontents

\section{Introduction}\label{S:Intro}

We will study the following system of equations that models the evolution of a magnetoviscoelastic fluid
\begin{equation}
\label{magneto sys}
\left\{\begin{aligned}
\partial_t u   + u   \cdot \nabla u  -\mu_s \Delta u   +\nabla \pi   &=-\nabla \cdot(\nabla m \odot \nabla m)  + \nabla \cdot (F F^{\sT}) 
 &&\text{in} &&   \bR_+\times\Omega, \\
\nabla \cdot u   &=0 &&\text{in}&& \bR_+\times\Omega ,\\
u &=0 &&\text{on}&&\bR_+\times\partial\Omega  , \\
\partial_t F  + u  \cdot\nabla F  -(\nabla u )^{\sT} F  &=\kappa \Delta F  &&\text{in}&&\bR_+\times\Omega,\\
F  &=0   &&\text{on}&&\bR_+\times\partial\Omega, \\
\partial_t m   + u  \cdot\nabla m   &= - \alpha m \times (m \times \Delta m)  - \beta m  \times \Delta m   &&\text{in}&&\bR_+\times\Omega,\\
\partial_\nu m   &=0   &&\text{on}&&\bR_+\times\partial\Omega, \\
|m |  &=1   &&\text{in}&& \bR_+\times\Omega, \\
 (u(0), F(0), m(0))& =(u_0, F_0, m_0) &&\text{in}&& \Omega. \\
\end{aligned}\right.
\end{equation}
Here, $\Omega \subset \bR^3$ is a bounded connected  $C^3$-domain   with outward unit normal field $\nu$.  
The unknowns $(u,F,m): \bR_+\times \Omega\to \bR^3\times \bM^3\times\bR^3$
denote the fluid velocity, the deformation tensor field and the magnetization field, respectively,
while  $\pi$ is the pressure.
Moreover,   $\bM^3$   stands for the set of all $(3\times 3)$-real matrices.
The parameters $\alpha, \beta>0$ are the so-called Gilbert damping and  the exchange constant, while
$\mu_s$ and $\kappa$ are the dynamic viscosity and dissipative coefficient, respectively.

\noindent
The notation $\nabla m \odot \nabla m$ means $\nabla m (\nabla m)^{\sT}$.
Hence,
$\nabla m \odot \nabla m$ 
is a symmetric tensor with coefficients $[\nabla m \odot \nabla m]_{ij}= \partial_i m \cdot \partial_j m$. 
\goodbreak
 \eqref{magneto sys} is a coupled system of equations containing
 \begin{itemize}
 \item the incompressible Navier--Stokes equations for the velocity field $u$  
 and including in addition magnetic and elastic terms in the stress tensor,
 \item a transport-stretch-dissipative system for the deformation tensor $F$,
 \item a convected Landau--Lifshitz--Gilbert system for the magnetization field $m$.
 \end{itemize}
 As a multi-physical hydrodynamics model, \eqref{magneto sys} enjoys the following energy dissipation property:
%\begin{equation*}
% \frac{d}{dt}\int_{\Omega}\frac{1}{2}\left( |u|^2+| F|^2+|\nabla  m|^2 \right)\, dx  
%  =-\int_\Omega\left( \mu_s  |\nabla u|^2 + \kappa | \nabla F |^2  + \alpha |\,  \Delta m + |\nabla m|^2\,  |^2\right)\,dx, 
%\end{equation*}
%
\begin{equation*}
 \label{energy-dissipation}
  \begin{aligned}
    &\quad \frac{d}{dt}\int_{\Omega}\frac{1}{2}\left( |u|^2+| F|^2+|\nabla  m|^2 \right)\, dx  \\
    &\hspace{3cm}=-\int_\Omega\left( \mu_s  |\nabla u|^2 + \kappa | \nabla F |^2  + \alpha |\,  \Delta m + |\nabla m|^2 m\,  |^2\right)\,dx, \\
    % &\hspace{3cm}=- \mu_s\|\nabla u\|_2^2 - \kappa  \|\nabla g\|_2^2    -\|  \Delta m + |\nabla m|^2 m	  \|_2^2 .
  \end{aligned} 
\end{equation*}
see Proposition~\ref{pro: energy-dissipation}.

The system \eqref{magneto sys} was first introduced in \cite{BFLS18, For16l}.
This model can describe  the motion of   fluids with micromagnetic and elastic particles such as ferrofluids \cite{BAB99, AR09} and magnetorheological fluids \cite{Wer14}.
Existence of  weak solutions in 2D was established in \cite{BFLS18}
under a smallness condition on the initial data by using a Galerkin approximation.
In  \cite{KKS21}, the authors extended these results
under more general assumptions on the elastic energy density.
Moreover, they proved local-in-time existence of strong solutions and they established a weak-strong uniqueness property.
Recently, the authors in \cite{KorSch22} obtained global weak solutions to \eqref{magneto sys} with partial regularity in a 
2D periodic domain by a careful blow-up analysis near singularities.

The main difficulty in constructing global weak solutions to \eqref{magneto sys} is caused by the lack of sufficient integrability in the a priori energy estimates for the stress tensor term $\nabla  m \odot\nabla  m $.

In 3D, the authors in  \cite{For16l,GKMS21,SchZab18} consider the
simplified system
  \begin{equation}
 \label{eqn:GLappro}
  \left\{
    \begin{aligned}
      \partial_t u    +u    \cdot \nabla u   -\mu_s \Delta u   +\nabla \pi 
      &=-\nabla\cdot \left( \nabla m \odot\nabla m  \right) +\nabla\cdot (F F^{\sT}) , \\  
      \nabla\cdot  u  &=0,  \\
      \partial_t F  + u  \cdot \nabla F  -(\nabla u )^{\sT}  F   &=\kappa\Delta F ,  \\
      \partial_t m  + u  \cdot \nabla m  
      &=\Delta m  -\frac{1}{\varepsilon^2}(| m  |^2-1) m ,
  \end{aligned}
  \right. 
 \end{equation}
where 
the constraint $|m|\equiv 1$ is replaced
by the Ginzburg--Landau penalization term $\frac{1}{\varepsilon^2}(| m |^2-1)^2$. 

In \cite{For16l}, the author adapted the approach in \cite{LinLiu95} to show the existence of weak solutions to \eqref{eqn:GLappro} with the combination of a Galerkin approximation scheme and a fixed point argument. The weak-strong uniqueness of solutions to \eqref{eqn:GLappro} was established in \cite{SchZab18} under the Prodi--Serrin condition. In case the initial values have higher regularity, the authors in \cite{GKMS21} obtained the well-posedness of strong solutions to \eqref{eqn:GLappro} via a priori estimates that are uniform in the approximate solutions.

We would like to point out that 
 the regularization term $\kappa \Delta F$ in \eqref{magneto sys} and \eqref{eqn:GLappro} 
 with $0<\kappa \ll 1$  plays an important role in the mathematical analysis. 
If $\kappa=0$, the evolution of the deformation tensor field becomes hyperbolic, and in this case, even in 2D, the existence of weak solutions to incompressible viscoelastic fluids $(m=0)$ with large initial data remains an open problem. Local well-posedness of strong solutions to \eqref{eqn:GLappro} without the regularization term was established in \cite{Wenjing2018Local} in a periodic domain in two or three dimensions.

 From the viewpoint of modeling, $F=0$ represents a liquid phase that contains no elastic solid particles. 
 We refer the reader to \cite{LiuWal01} for more details.

\smallskip
To the best of our knowledge, there are so far no existence results for system \eqref{magneto sys} in 3D.
In our approach, we consider  \eqref{magneto sys} as a quasilinear system
and prove that the system is parabolic.
We can then apply the theory of maximal $L_p$- regularity to establish short time existence and uniqueness of strong solutions, 
see Theorem~\ref{Thm: existence and uniqueness}.
In Sections 3 and 4, we show that the set of equilibria of 
\eqref{magneto sys} is given by 
\begin{equation*}
%\label{equilibria}
\cE=\{(0,0_3,m_*,\pi_*)\},
\end{equation*}
where $m_*\in H^2_q(\Omega,\bR^3)$ solves the nonlinear constrained elliptic problem
\begin{equation}
\label{equilibrium-problem}
\left\{\begin{aligned}
\Delta m_* + |\nabla m_*|^2 m_* &= 0 &&\text{in}&&\Omega,\\
  |m_*| &\equiv 1  &&\text{in}&&\Omega,\\
\partial_\nu m_*  &=0   &&\text{on}&&\partial\Omega 
\end{aligned}\right.
\end{equation}
and $\pi_*=-\frac{1}{2}|\nabla m_*|^2 +C$
for some constant $C$. 

In particular, we have that 
\begin{equation*}
%\label{constant-equilibria}
\cE_{c}:=\{(0,0_3,m_*,\pi_*)\in \bR^3\times\bM^3\times \bS^2\times\bR\}\subset\cE.
\end{equation*}
We call $\cE_{c}$ the set of constant equilibria.
We can prove that all constant equilibria are normally stable, and that each solution that starts
out close to a constant equilibrium exists globally and converges to a (possibly different) constant equilibrium.
Moreover, we show that any solution that is bounded in an appropriate topology exists globally 
and converges to the set $\cE$ of equilibria.

In case we choose $(u_0,F_0)=(0,0)$, system \eqref{magneto sys} 
reduces to the well-known Landau-Lifshitz-Gilbert equation
\begin{equation}
\label{LLG}
\left\{\begin{aligned}
\partial_t m    &= - \alpha m \times (m \times \Delta m)  - \beta m \times \Delta m  &&\text{in}&&  \bR_+\times\Omega,\\
\partial_\nu m &=0   &&\text{on}&&\bR_+\times\partial\Omega, \\
|m |  &=1   &&\text{in}&& \bR_+\times\Omega, \\
m(0)& =m_0 &&\text{in}&& \Omega. \\
\end{aligned}\right.
\end{equation}
In this case, we obtain the energy dissipation relation
\begin{equation*}
 \frac{d}{dt}\int_{\Omega}\frac{1}{2}|\nabla  m|^2 \, dx = - \int_\Omega \alpha |\,  \Delta m + |\nabla m|^2m\,  |^2 \,dx. 
\end{equation*}
By the same arguments as in Section 4, we can conclude that the set of equilibria of \eqref{LLG} is given by 
the solutions of \eqref{equilibrium-problem}.
Hence, all the results established for system \eqref{magneto sys} remain true for the Landau-Lifshitz-Gilbert equation.
A similar result was obtained in \cite{Melcher2012} in case  $\Omega=\mathbb{R}^n$ with $n\ge 3$. 

\smallskip
Finally, we mention that all of our results remain valid in 2D, that is, in case
$\Omega\subset \bR^2$ and $(u,F,m):\Omega\to \bR^2\times \bM^2\times \bR^3$.

\medskip\noindent
\textbf{Notation:} 
For the readers' convenience, we list here some notation and conventions used throughout the manuscript.

In the following, all vectors $a=(a_1,\cdots, a_n)\in \bR^n$ are viewed as column vectors. 
For two vectors $a,b\in\bR^n$, the Euclidean inner product is denoted by $a\cdot b$.
Given two matrices $A,B\in \bM^n$, the Frobenius matrix inner product $A:B$ is given by
$$
A:B={\rm Tr} (AB^{\sT}),
$$
where ${}^{\sT}$ is the transpose.
Suppose $\Omega$ is an open subset of $\bR^n$.
If $u\in C^1(\Omega;\bR^n)$, we set $\nabla u(x)= e_j\otimes \partial_ju(x)$
for $x\in\Omega$.
Hence, for $u=(u_1,\cdots , u_n)\in C^1(\Omega; \bR^n)$, we have 
\begin{equation}
\label{gradientvector}
[\nabla u(x)]_{ij}= \partial_i u_j(x), \;\; 1\le i, j\le n, \;\; x\in \Omega.
\end{equation}
We note that $[\nabla u(x)]^{\sT}$ corresponds to the Fr\'echet derivative of $u$ at $x\in\Omega$.

If $A\in C^1(\Omega;\bM^n)$, its divergence $\nabla \cdot A$ is the vector function defined by
\begin{equation}
\label{divergence-matrix}
(\nabla \cdot A)(x)=(\partial_j A(x))^{\sT}e_j, \;\; x\in \Omega.
\end{equation}
Hence, if $A=[a_{ij}]\in C^1(\Omega;\bM^n)$, its divergence is given by
 $$[(\nabla\cdot A)(x)]_i=\partial_j a_{ji}(x), \;\; i=1,\cdots , n,\;\; x\in \Omega. $$
Here and in the sequel, we use the summation convention, indicating that terms with repeated indices are added.
We note   that  \eqref{gradientvector} and
 \eqref{divergence-matrix} imply
$$
\nabla\cdot (\nabla u)=\Delta u , \quad u\in C^2(\Omega;\bR^n),
$$
and 
\begin{equation}
\label{divergence-property}
(\nabla\cdot A)\cdot u = \nabla\cdot (Au)- A:\nabla u, \quad A\in C^1(\Omega;\bM^n), \ u\in C^1(\Omega;\bR^n).
\end{equation}
For a matrix $A\in C^1(\Omega;\bM^n)$, we set
$|\nabla A|^2=\partial_j A:  \partial_j A.$

\smallskip
For functions 
$f,g\in L_2(\Omega; \bR^m)$, 
$$(f|g)_\Omega =\int_\Omega f\cdot g\, dx$$ denotes the $L_2$-inner product.
For any  Banach space $X$, $s\ge 0$, $p,q\in (1,\infty)$, 
$B^s_{pq}(\Omega;X)$ denote the  $X-$valued Besov spaces,
whereas $H^s_{q}(\Omega;X)$ are the  Bessel-potential spaces. 
When the choice of $X$ is clear from the context, we will just write $B^s_{pq}(\Omega)$ or $H^s_{q}(\Omega)$.

\goodbreak
\medskip
For $p\in (1,\infty)$ and $\mu\in (0,1]$, the $X$-valued $L_p$-spaces with temporal weight are defined by
$$
L_{p,\mu}((0,T);X):=\left\{ f: (0,T)\to X: \, t^{1-\mu}f(t)\in L_p((0,T);X)  \right\}.
$$
Similarly,
$$
H^1_{p,\mu}((0,T);X):=\left\{ f \in  L_{p,\mu}((0,T);X)\cap H^1_1((0,T);X):\,  f'(t)\in L_{p,\mu}((0,T);X) \right\}.
$$

For any two Banach spaces $X$ and $Y$, the notation $\cL(X,Y)$ stands for the set of all bounded linear maps from $X$ to $Y$ and $\cL(X):=\cL(X,X)$.
 
%%%%%%%%%%%%%%%%%%%%%%%%%%%%%%%%%%%%%%%%
\section{Existence and uniqueness of solutions }\label{Section:existence and unique}

%%%%%%%%%%%%%%%%%%%%%%%%%%%%%%%%%%%%%%%%

In this section, we show how to formulate system~\eqref{magneto sys} as a quasilinear equation.
Using the theory of maximal $L_p$-regularity, we establish existence and uniqueness of local in time solutions,
and we show that solutions have additional time regularity.
We start by expressing the term 
$$ \alpha m\times (m \times \Delta m) + \beta m \times \Delta m$$
in a form that is more convenient for our analysis. 
By the well-known identity $a\times (b\times c)= (a\cdot c)b -(a\cdot b)c$, we have
\begin{equation*}
\label{double cross product 1}
m\times (m\times \Delta m)= (m \cdot \Delta m) m - |m|^2 \Delta m .
\end{equation*}
By using the facts that $|m|=1$ and
$
0= \Delta |m|^2 =2|\nabla m|^2 + 2m \cdot \Delta m,
$
we obtain
\begin{equation}
\label{double cross product 2}
m\times (m \times \Delta m)= - (\Delta m + |\nabla m|^2 m ),
\end{equation}
 provided $m$ is sufficiently smooth.
 Setting
\begin{align*}
\sM(m)=
\begin{bmatrix}
0 &  -m_3 &m_2 \\
m_3 & 0 &  -m_1 \\
-m_2 & m_1 & 0
\end{bmatrix}, \quad m=(m_1,m_2, m_3),
\end{align*}
we can write $m\times \Delta m = \sM(m) \Delta m.$
%\begin{equation}
%\label{cross product}
%m\times \Delta m = \sM(m) \Delta m.
%\end{equation}
Hence, under the constraint $|m|\equiv 1$, \eqref {magneto sys} is equivalent to  the following system 
\begin{equation}
\label{magneto sys 2}
\left\{\begin{aligned}
\partial_t u   + u   \cdot \nabla u   -\mu_s   \Delta u   +\nabla \pi  
&=   -\nabla \cdot (\nabla m \odot \nabla m)  + \nabla \cdot (F F^{\sT}  ) 
 &&\text{in}&& \bR_+\times\Omega,\\
\nabla \cdot u  &=0 &&\text{in}&&\bR_+\times\Omega ,\\
u &=0 &&\text{on}&&\bR_+\times\partial \Omega  , \\
\partial_t F  -\kappa \Delta F &=   (\nabla u )^{\sT} F    -  u  \cdot\nabla F  &&\text{in}&&\bR_+\times\Omega,\\
%\nabla \cdot F  &=0 &&\text{in}&&\bR_+\times\Omega ,\\
F &=0   &&\text{on}&&\bR_+\times\partial\Omega, \\
\partial_t m + u  \cdot\nabla m  &=   ( \alpha I_3- \beta \sM(m) )\Delta m   + \alpha |\nabla m |^2 m  &&\text{in}&&\bR_+\times\Omega,\\
\partial_\nu m   &=0   &&\text{on}&&\bR_+\times\partial\Omega, \\
|m |  &=1   &&\text{in}&& \bR_+\times\Omega, \\
(u(0), F(0), m(0))& =(u_0, F_0, m_0) &&\text{in}&& \Omega, 
\end{aligned}\right.
\end{equation}
where $I_3$ is the $3\times 3$ identity matrix.
Neglecting the constraint $|m|\equiv 1$,
we have
\begin{equation}
\label{magneto sys 3}
\left\{\begin{aligned}
\partial_t u   + u  \cdot \nabla u -\mu_s   \Delta u   +\nabla \pi  
&=   -\nabla \cdot (\nabla m \odot \nabla m)   + \nabla \cdot (F F^{\sT}  ) 
 &&\text{in}&& \bR_+\times\Omega ,\\
\nabla \cdot u &=0 &&\text{in}&&\bR_+\times\Omega ,\\
u  &=0 &&\text{on}&&\bR_+\times\partial \Omega  , \\
\partial_t F -\kappa \Delta F &=   (\nabla u  )^{\sT} F   -  u  \cdot\nabla F   &&\text{in}&&\bR_+\times\Omega,\\
F  &=0   &&\text{on}&&\bR_+\times\partial\Omega, \\
\partial_t m  + u \cdot\nabla m &=   ( \alpha I_3- \beta \sM(m) )\Delta m   + \alpha |\nabla m |^2 m &&\text{in}&&\bR_+\times\Omega,\\
\partial_\nu m   &=0   &&\text{on}&&\bR_+\times\partial\Omega, \\
(u(0), F(0), m(0))& =(u_0, F_0, m_0) &&\text{in}&& \Omega. 
\end{aligned}\right.
\end{equation}
We will first study the unconstrained system~\eqref{magneto sys 3}, 
and then show in a second step that the constraint $|m|\equiv 1$ is preserved 
in case $|m_0|\equiv 1$.

\medskip
The main tool to study \eqref{magneto sys 3} is the  theory of maximal $L_p$-regularity.
For $\theta\in (0,\pi]$, the open sector with angle $2\theta$ is denoted by
$$\Sigma_\theta:= \{\omega\in \mathbb{C}\setminus \{0\}: |\arg \omega|<\theta \}. $$
\begin{definition}
Let $X$ be a complex Banach space, and $\cA$ be a densely defined closed linear operator in $X$ with dense range. $\cA$ is called sectorial if $\Sigma_\theta \subset \rho(-\cA)$ for some $\theta>0$ and
$$ \sup\{\|\lambda (\lambda +\cA)^{-1}\|_{\cL(X)} : \lambda\in \Sigma_\theta \}<\infty. $$
The class of sectorial operators in $X$ is denoted by $\cS(X)$. 
\end{definition}

To introduce the notion of maximal  $L_p$-regularity, let us consider the following abstract Cauchy problem on $[0,T]$
\begin{equation}
\label{S4: Cauchy problem}
\left\{\begin{aligned}
\partial_t u(t) +\cA u(t) &=f(t), &&t\in (0,T),\\
u(0)&=0 . &&
\end{aligned}\right. 
\end{equation}
\begin{definition}
Assume that $X_1\overset{d}{\hookrightarrow}X_0$ is some densely embedded Banach couple.
Suppose that $\cA\in \cS(X_0)$ with $D(\cA)=X_1$.
The operator $\cA$ is said to have  the property of maximal $L_p$-regularity if for any fixed $T>0$ and
$$f\in L_p((0,T); X_0) ,$$
\eqref{S4: Cauchy problem} has a unique solution
$$u\in L_p((0,T); X_1) \cap H^1_p((0,T); X_0)  .$$
We denote the set of all operators $\cA\in S(X)$ which enjoy the property of maximal $L_p$-regularity by 
$$\cA\in \mathcal{MR}_p(X_1, X_0).$$
\end{definition}
\noindent
We refer to \cite{PruSim16} for additional background information.

%%%%%%%%%%%%%%%%%%%%%%%%%%%%%%%%%%%%%
\medskip
Let $P_H: L_q(\Omega;\bR^3) \to L_{q,\sigma}(\Omega;\bR^3) $   be the Helmholtz projection, where 
$$
L_{q,\sigma}(\Omega;\bR^3) :=P_H (L_q(\Omega;\bR^3) )
$$
is the space of all solenoidal vector fields in $L_q(\Omega; \bR^3)$.
Setting 
$$H^2_{q,\sigma}(\Omega;\bR^3) := H^2_q(\Omega;\bR^3)\cap L_{q,\sigma}(\Omega;\bR^3),$$
we let  $\cA_q: D(\cA_q)\to L_{q,\sigma}(\Omega;\bR^3)$ be the Stokes operator, defined by
\begin{equation*}
\label{Stokes}
\cA_q u:=-\mu_s   P_H \Delta u,\quad D(\cA_q):= \{u\in H^2_{q,\sigma}(\Omega;\bR^3): u=0 \text{ on }\partial\Omega \}.
\end{equation*}
Similarly, we can define 
$\cG_q: D(\cG_q)\to L_q(\Omega;\bM^3)$  by 
\begin{equation*}
\label{Gq}
{\cG_q} F:=-\kappa \Delta F,\quad 
 D(\cG_q):= \{F\in H^2_q(\Omega;\bM^3): F=0 \text{ on }\partial\Omega \}.
\end{equation*}
Further, given any $m\in C(\overline{\Omega};\bR^3)$,  the operator $\cD_q(m): D(\cD_q(m))\to  L_q(\Omega;\bR^3) $ is defined by
\begin{equation*}
\label{Dq}
\begin{aligned}
&\cD_q (m) h:= - ( \alpha I_3 - \beta \sM(m) )\Delta h,  \\
 &D(\cD_q(m)):= \{h\in H^2_q(\Omega;\bR^3): \partial_\nu h=0 \text{ on }\partial\Omega \}.
\end{aligned}
\end{equation*}
Next, we set 
\begin{equation}
\label{def-B}
[\cB_q(m)h]_i=  \partial_i m \cdot \Delta h+   \nabla m : \partial_i \nabla  h ,   \quad i=1,2,3.
\end{equation}
It follows that 
\begin{equation}
\label{B(m)m}
\cB_q(m)m=\nabla\cdot (\nabla m \odot \nabla m) \quad\text{for each $m\in H^2_q(\Omega; \bR^3)$.}
\end{equation}
Note that $\cB_q(m) \in \cL(D(\cD_q(m)), L_q(\Omega;\bR^3))$ for any $m\in C^1(\overline{\Omega};\bR^3)$.
Finally, we define the spaces
$$
X_0=L_{q,\sigma}(\Omega;\bR^3) \times L_q(\Omega;\bM^3) \times L_q(\Omega;\bR^3)
$$
and
$$
X_1= D(\cA_q) \times D(\cG_q) \times D(\cD_q(m)).
$$
It is well known that $\cA_q$ and $\cG_q$ enjoy the property of maximal $L_p$-regularity, cf. \cite{GHHSS10, Giga, Sol77}
and \cite[Section 6.3, Chapter 7]{PruSim16}.
To deal with $\cD_q(m)$ for $m\in C(\overline{\Omega};\bR^3)$, we set 
$$\cD_q(m(x),\xi):=(\alpha I_3- \beta \sM(m(x)) )|\xi|^2,\quad x\in\Omega, \; \; \xi \in \bR^3,$$ 
for the symbol of the differential operator $\cD_q(m)$.
An easy computation shows that 
\begin{equation*}
\sigma (\cD_q(m(x),\xi))= \{\alpha, \alpha \pm  i\beta |m(x)|\},\quad x\in\Omega, \;\; \xi \in \mathbb S^2,
\end{equation*}
where $\sigma $ denotes the spectrum.
Since $m\in C(\overline{\Omega};\bR^3)$, $\cD_q(m(x),\xi)$ is  normally elliptic for every $x\in \overline{\Omega}$, see for instance \cite[Definition~6.1.1]{PruSim16}.
By \cite[Theorem~6.3.2]{PruSim16}, $\cD_q(m)$ has the property of maximal $L_p$-regularity.
Then,  the operator $A_q:X_1\to X_0$ defined by
\begin{equation}
\label{def-Aq}
A_q(m)=
\begin{bmatrix}
\cA_q &  0 & P_H \cB_q(m) \\
0 & \cG_q & 0 \\
0 & 0 & \cD_q(m)
\end{bmatrix}
\end{equation}
enjoys the property of maximal $L_p$-regularity for every $m\in C^1(\overline{\Omega};\bR^3) $ as well, due to its upper triangular structure.

Indeed, given any $f=(f_1,f_2,f_3)\in L_p((0,T);X_0)$, we consider  the system
\begin{equation}
\label{Cauchy problem 2}
\left\{\begin{aligned}
\partial_t z +A_q (m)z  &=f(t), &&t\in (0,T),\\
z(0)&=0 , &&
\end{aligned}\right. 
\end{equation}
where $z=(v,G,h)$. By the maximal $L_p$-regularity property of $\cG_q$ and $\cD_q(m)$, one can  find 
for each $m\in C(\overline{\Omega};\bR^3)$ a (unique) solution
$$(g,h)\in L_p((0,T); H^2_q(\Omega ;\bM^3\times \bR^3))\cap H^1_p((0,T);L_q(\Omega ;\bM^3\times\bR^3))$$
for the system
\begin{equation*}
\label{Cauchy problem 3}
\left\{\begin{aligned}
\partial_t G +\cG_q G&=  f_2  &&\text{in}&&\Omega,\\
G &=0   &&\text{on}&&\partial\Omega, \\
\partial_t h + \cD_q(m)h &= f_3 &&\text{in}&&\Omega   ,\\
\partial_\nu h  &=0   &&\text{on}&&\partial\Omega, \\
 (G(0),h(0))&=(0,0)  . &&
\end{aligned}\right. 
\end{equation*}
Easy computations show that $\cB_q(m)h \in L_p((0,T); L_q(\Omega; \bR^3))$ for $m\in C^1(\overline{\Omega};\bR^3) $.
From the maximal $L_p$-regularity property of $\cA_q$, we thus infer that there exists a (unique) vector $v\in L_p((0,T); H^2_{q,\sigma}(\Omega; \bR^3))\cap H^1_p((0,T);L_{q,\sigma}(\Omega ; \bR^3))$ that solves
\begin{equation*}
\label{Cauchy problem 4}
\left\{\begin{aligned}
\partial_t v +\cA_q v &= - P_H \cB_q(m)h +  f_1  &&\text{in}&&\Omega,\\
v &=0   &&\text{on}&&\partial\Omega, \\
 v(0)&=0 . &&
\end{aligned}\right. 
\end{equation*}
Hence $(v,G,h)$ is the unique solution of \eqref{Cauchy problem 2}.
This shows that 
\begin{equation}
\label{max-reg-Aq}
A_q(m)\in \mathcal{MR}_p(X_1,X_0) \quad \text{for each $m\in C^1(\overline{\Omega};\bR^3)$}.
\end{equation}

%%%%%%%%%%%%%%%%%%%%%%%%%%%%%%%%%%%%
%Operators: nonlinear part
%%%%%%%%%%%%%%%%%%%%%%%%%%%%%%%%%%%%%
In addition, we define for $z=(u,F,m)$
\begin{equation}
\label{def-G}
G(z): =\left(P_H \left[\nabla \cdot (F F^{\sT}) -u\cdot \nabla u \right]   ,    (\nabla u)^{\sT} F   -u \cdot\nabla F , \alpha|\nabla m|^2 m -u \cdot\nabla m  \right).
\end{equation}

%%%%%%%%%%%%%%%%%%%%%%%%%%%%%%%%%%%%
%Function spaces
%%%%%%%%%%%%%%%%%%%%%%%%%%%%%%%%%%%%%
Given any $1<p,q<\infty$, $T>0$ and $\mu\in (1/p,1]$, we set
$$
\bE_{0,\mu}(T):=L_{p,\mu}((0,T); X_0) \quad\text{and} \quad 
\bE_{1,\mu}(T):=L_{p,\mu}((0,T); X_1)\cap H^1_{p,\mu}((0,T); X_0).
$$
It is well known that 
$$
\bE_{1,\mu}(T) \hookrightarrow C([0,T]; X_{\gamma,\mu}) \quad\text{where}\quad X_{\gamma,\mu}:=(X_0,X_1)_{\mu-1/p,p}.
$$
See \cite{PruSim04}, or \cite[Theorem 3.4.8]{PruSim16}. 
% \hookrightarrow B^{2\mu-2/p}_{qp }(\Omega;\bR^{15}).
Observe that by \cite[Theorem 3.4]{Ama00}  and \cite[Theorem~4.3.3]{Trib78}, the triple $(u,F,m)\in B^{2\mu-2/p}_{qp }(\Omega;\bR^{15})$
belongs to $X_{\gamma,\mu}$ iff
\begin{equation}
\label{interpolation}
\begin{aligned}
&u\in B^{2\mu-2/p}_{qp,\sigma}(\Omega;\bR^3) &&\text{and} \quad  u=0 \text{ on } \partial \Omega ,\\
&F\in B^{2\mu-2/p}_{qp }(\Omega;\bM^3)  &&\text{and} \quad F=0 \text{ on } \partial \Omega, \\
&m\in  B^{2\mu-2/p}_{qp }(\Omega;\bR^3) && \text{and} \quad \partial_\nu m=0 \text{ on } \partial \Omega,
\end{aligned}
\end{equation}
where $B^{2\mu-2/p}_{qp,\sigma}(\Omega;\bR^3):=B^{2\mu-2/p}_{qp }(\Omega;\bR^3)\cap L_{q,\sigma}(\Omega;\bR^3)$.
In order for $\partial_\nu m$ to be defined, we assume that $2\mu -2/p -1/q>1$. 

One readily verifies that
\begin{equation}
\label{analytic}
A_q\in C^\omega(X_{\gamma,\mu},\cL(X_1,X_0)),\quad 
G\in C^\omega(X_{\gamma,\mu},X_0),
\end{equation}
with $\omega$ being the notation for real analyticity,
as long as 
$$
X_{\gamma,\mu}\hookrightarrow C^1(\overline{\Omega};\bR^{15}).
$$  
The above embedding holds whenever  $\mu \in \left(\frac{1}{2}+\frac{1}{p}+\frac{3}{2q},1 \right]$.
%The spaces $B^{2\mu-2/p}_{qp,\sigma}(\Omega;\bR^3)$ and $ B^{2\mu-2/p}_{qp,\sigma }(\Omega;\bM^3)$ are defined in a similar manner as before.

%%%%%%%%%%%%%%%%%%%%%%%%%%%%%%%%%%%%
%abstract evolution equation
%%%%%%%%%%%%%%%%%%%%%%%%%%%%%%%%%%%%%
By  the definitions \eqref{def-B}, \eqref{def-Aq} and \eqref{def-G}, and the relation \eqref{B(m)m},
one sees that system \eqref{magneto sys 3} can be recast as the abstract evolutionary system  

\begin{equation}
\label{abstract evolution}
\partial_t z + A_q(m) z=G(z),\quad z(0)=z_0=(u_0,F_0,m_0).
\end{equation}
We have the following result on existence and uniqueness of solutions of \eqref{abstract evolution}.
\goodbreak
\begin{proposition}
\label{pro: abstract}
Suppose   $\mu \in \left(\frac{1}{2}+\frac{1}{p}+\frac{3}{2q},1 \right]$ and let $z_0\in X_{\gamma,\mu}$. 
Then there exists $T=T(z_0)$ such that the evolution equation \eqref{abstract evolution}
admits a unique solution $z\in \bE_{1,\mu}(T)$.
Each solution can be extended to a maximal existence interval $[0, T_+(z_0))$ in the sense that  
\begin{enumerate}[label={\em(\roman*)}]
\item either $T_+(z_0)=\infty$ or
\item $\lim\limits_{t\to T_+(z_0)} z(t)  $ does not exist in $X_{\gamma,\mu}$.
%\item $\liminf\limits_{t\to T_+(u_0)}{\rm dist}_{X_{\gamma,\mu}}(z(t), $
\end{enumerate}
Moreover, $z$  enjoys the additional regularity properties
\begin{equation}
\label{additiional-regularity-z}
z\in C([0,T_+); X_{\gamma,\mu})\cap C^\omega ((0,T_+); X_1) \cap  C^\omega ((0,T_+)\times \Omega ; \bR^{15}).  
\end{equation}
\end{proposition}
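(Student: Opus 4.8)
The plan is to read \eqref{abstract evolution} as a quasilinear parabolic evolution equation and to apply the general local well-posedness and smoothing theory built on maximal $L_p$-regularity (cf.\ \cite{PruSim16}). Most of the structural input has already been assembled in the discussion preceding the statement: by \eqref{analytic} the maps $z\mapsto A_q(m)$ and $z\mapsto G(z)$ are real analytic from $X_{\gamma,\mu}$ into $\cL(X_1,X_0)$, respectively into $X_0$, for every $\mu$ in the indicated range (this is where $\mu>\frac{1}{2}+\frac{1}{p}+\frac{3}{2q}$ is used, since it forces $X_{\gamma,\mu}\hookrightarrow C^1(\overline\Omega;\bR^{15})$, so that $m\mapsto\cB_q(m)$ and $m\mapsto\cD_q(m)$ are well defined); and by \eqref{max-reg-Aq} the frozen operator $A_q(m_0)$ belongs to $\mathcal{MR}_p(X_1,X_0)$, because $z_0\in X_{\gamma,\mu}$ forces $m_0\in C^1(\overline\Omega;\bR^3)$. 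In particular $A_q$ and $G$ are locally Lipschitz on $X_{\gamma,\mu}$, so the local existence and uniqueness theorem for quasilinear parabolic problems (\cite[Theorem~5.1.1]{PruSim16}) applies and yields, for each $z_0\in X_{\gamma,\mu}$, some $T=T(z_0)>0$ and a unique solution $z\in\bE_{1,\mu}(T)$ of \eqref{abstract evolution}.

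Next, the maximal interval $[0,T_+)$ and the alternative (i)--(ii) constitute the continuation statement attached to that theorem (\cite[Corollary~5.1.2]{PruSim16}): if $T_+<\infty$ and $\lim_{t\to T_+}z(t)$ existed in $X_{\gamma,\mu}$, one could restart \eqref{abstract evolution} at $t=T_+$ from that limit and, by uniqueness, prolong $z$ -- a contradiction. Parabolic regularization then improves the already known membership $z\in C([0,T_+);X_{\gamma,\mu})$ (from $\bE_{1,\mu}(T)\hookrightarrow C([0,T];X_{\gamma,\mu})$) to $z\in C((0,T_+);X_1)$: restarting at any $\tau\in(0,T_+)$ the temporal weight becomes inactive, so $z\in H^1_p((\tau,T);X_0)\cap L_p((\tau,T);X_1)\hookrightarrow C([\tau,T];(X_0,X_1)_{1-1/p,p})$, and a finite bootstrap (each time restarting with the improved trace and a larger weight exponent) places $z(t)$ in $X_1$ for every $t>0$.

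For the analyticity in time I would use the parameter trick (see the corresponding arguments in \cite{PruSim16}). Since \eqref{abstract evolution} is autonomous, for $\lambda$ near $1$ the rescaled curve $z_\lambda(t):=z(\lambda t)$ solves $\partial_t z_\lambda+\lambda A_q(m_\lambda)z_\lambda=\lambda G(z_\lambda)$, $z_\lambda(0)=z_0$, on a fixed interval $[0,T]$ with $T<T_+$. Consider the map $\Psi$ from $(1-\delta,1+\delta)\times\bE_{1,\mu}(T)$ into $\bE_{0,\mu}(T)\times X_{\gamma,\mu}$,
\[
\Psi(\lambda,w):=\bigl(\partial_t w+\lambda A_q(P_3 w)\,w-\lambda\,G(w),\ w(0)-z_0\bigr),
\]
where $P_3 w$ denotes the magnetization component of $w$. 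By \eqref{analytic} and the mapping properties of $A_q,\cB_q,G$ collected above, $\Psi$ is real analytic, $\Psi(1,z)=0$, and $D_w\Psi(1,z)$ is precisely the linearization of the quasilinear problem at $z$. This linear operator is an isomorphism of $\bE_{1,\mu}(T)$ onto $\bE_{0,\mu}(T)\times X_{\gamma,\mu}$: its principal part $\partial_t+A_q(m(\cdot))$ has maximal $L_p$-regularity because $t\mapsto A_q(m(t))$ is continuous into $\cL(X_1,X_0)$ (as $m\in C([0,T];C^1(\overline\Omega))$) with $A_q(m(t))\in\mathcal{MR}_p(X_1,X_0)$ for each $t$ by \eqref{max-reg-Aq}, while the remaining terms (from linearizing the $m$-dependence of $A_q$ and from $DG$) have arbitrarily small $\bE_{0,\mu}$-norm on short time intervals and are absorbed by a perturbation-and-concatenation argument. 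The analytic implicit function theorem then produces an analytic branch $\lambda\mapsto w(\lambda)\in\bE_{1,\mu}(T)$ with $\Psi(\lambda,w(\lambda))=0$, which equals $z_\lambda$ by uniqueness; composing with the bounded evaluations $\bE_{1,\mu}(T)\ni w\mapsto w(t)\in X_1$ and letting $T\uparrow T_+$ gives $z\in C^\omega((0,T_+);X_1)$.

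Finally, interior spatial analyticity -- and thereby the full conclusion \eqref{additiional-regularity-z} -- follows from the same device applied to translations. Given a compact $K\subset\Omega$ and small $\xi\in\bR^3$, I would fix a real-analytic family $\{\varphi_\xi\}$ of $C^3$-diffeomorphisms of $\overline\Omega$ with $\varphi_0=\id$, $\varphi_\xi(x)=x+\xi$ on $K$, and $\varphi_\xi=\id$ near $\partial\Omega$ (so that the boundary conditions $u=0$, $F=0$, $\partial_\nu m=0$ are untouched); then the pull-back $z^{\xi}:=\varphi_\xi^{*}z$ solves a quasilinear system of the same type whose coefficients are built from those of $A_q,\cB_q,G$ by composition with $\varphi_\xi$ and multiplication by entries of $D\varphi_\xi$ and $(D\varphi_\xi)^{-1}$, hence depend analytically on $\xi$ and remain normally elliptic for $\xi$ small. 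Running the analytic implicit function theorem jointly in $(\lambda,\xi)$, exactly as for the time variable, yields analyticity of $(\lambda,\xi)\mapsto z^{\lambda,\xi}\in\bE_{1,\mu}(T)$; evaluating at a point $x_0\in K$ and a time $t_0\in(0,T_+)$ shows that $(t,x)\mapsto z(t,x)$ is analytic near $(t_0,x_0)$, and since $K$ and $t_0$ are arbitrary this proves \eqref{additiional-regularity-z}, the pressure $\pi$ -- recovered from the momentum equation -- inheriting the same regularity. I expect the main difficulty to be the isomorphism claim for $D_w\Psi(1,z)$: promoting the frozen-coefficient fact \eqref{max-reg-Aq} to maximal regularity on all of $[0,T]$ for the genuinely time-dependent linearization, including the perturbations $h\mapsto\beta\,\sM(h)\,\Delta m$ and $h\mapsto P_H\cB_q(h)m$ produced when the $m$-dependence of $\cD_q$ and $\cB_q$ is linearized, together with the lengthy but routine construction of the diffeomorphism family and the verification of analytic dependence of the transformed operators in the spatial step.
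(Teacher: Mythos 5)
Your proposal is correct and follows essentially the same route as the paper: the paper's proof simply cites the quasilinear maximal $L_p$-regularity theory (\cite[Theorems 5.1.1, 5.2.1 and Corollary 5.1.2]{PruSim16}, \cite[Theorem 2.1]{KohPruWil10}) together with \eqref{max-reg-Aq} and \eqref{analytic} for existence, uniqueness, continuation and time regularity, and the parameter trick of \cite{EschPruSim03} (see also \cite[Section 9.4.1]{PruSim16}) for the joint space-time analyticity \eqref{additiional-regularity-z}. What you have written is a faithful unpacking of exactly those references, including the time-rescaling/translation implicit-function-theorem argument.
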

\begin{proof}
The existence, uniqueness and time regularity follow from \eqref{max-reg-Aq}, \eqref{analytic}
and \cite[Theorems 5.1.1 and 5.2.1 and Corollary~5.1.2]{PruSim16}, see also
\cite[Theorem~2.1]{KohPruWil10}.
The joint space-time regularity~\eqref{additiional-regularity-z} can be proved by means of the parameter trick in \cite{EschPruSim03}, see also \cite[Section~9.4.1]{PruSim16}.
\end{proof}
%%%%%%%%%%%%%%%%%%%%%%%%%%%%%%%%%%%%
%equivalence of 3 systems
%%%%%%%%%%%%%%%%%%%%%%%%%%%%%%%%%%%%%
Next, we show that solutions of \eqref{abstract evolution} give rise to solutions of \eqref{magneto sys 3}, and vice versa.
%%%%%%
\begin{proposition}
\label{pro: equivalence}
Let $T>0$ be given. The following statements are equivalent:
\begin{itemize}
\item[{\rm (a)}]  \eqref{abstract evolution}  has a solution $(u,F,m)\in \bE_{1,\mu}(T) $.
\vspace{1mm}
\item[{\rm (b)}] \eqref{magneto sys 3} has a solution $(u,F,m,\pi)\in \bE_{1,\mu}(T)\times L_{p,\mu}((0,T); \dot{H}^1_q(\Omega))$.  
\end{itemize}
\end{proposition}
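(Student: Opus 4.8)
The plan is to treat this as a translation lemma between the Stokes-projected abstract problem and the pressure formulation, the only analytic input being the Helmholtz decomposition $L_q(\Omega;\bR^3)=L_{q,\sigma}(\Omega;\bR^3)\oplus\nabla\dot H^1_q(\Omega)$, valid on the bounded $C^3$-domain $\Omega$ for all $q\in(1,\infty)$, together with the identification $\ker P_H=\nabla\dot H^1_q(\Omega)$. Direction (b)$\Rightarrow$(a) is just an application of $P_H$, and direction (a)$\Rightarrow$(b) is the recovery of $\pi$ as the curl-free remainder of the momentum balance.

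For (b)$\Rightarrow$(a): assume $(u,F,m,\pi)\in\bE_{1,\mu}(T)\times L_{p,\mu}((0,T);\dot H^1_q(\Omega))$ solves \eqref{magneto sys 3}. Since $\nabla\cdot u=0$ in $\Omega$ and $u=0$ on $\partial\Omega$, one has $P_Hu=u$ and, because $u\in H^1_{p,\mu}((0,T);X_0)$ has values in $L_{q,\sigma}$, also $P_H\partial_t u=\partial_t u$ for a.e.\ $t$. Applying $P_H$ to the first line of \eqref{magneto sys 3} kills $\nabla\pi$; using $\cA_q u=-\mu_s P_H\Delta u$ and the identity $\cB_q(m)m=\nabla\cdot(\nabla m\odot\nabla m)$ from \eqref{B(m)m}, this becomes $\partial_t u+\cA_q u=P_H[\nabla\cdot(FF^{\sT})-u\cdot\nabla u]-P_H\cB_q(m)m$. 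The $F$- and $m$-equations, together with the boundary and initial conditions, are unchanged. In view of \eqref{def-B}, \eqref{def-Aq} and \eqref{def-G} this is exactly \eqref{abstract evolution}, and $(u,F,m)\in\bE_{1,\mu}(T)$ by hypothesis.

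For (a)$\Rightarrow$(b): let $(u,F,m)\in\bE_{1,\mu}(T)$ solve \eqref{abstract evolution}. Set
$$
w:=\mu_s\Delta u+\nabla\cdot(FF^{\sT})-\nabla\cdot(\nabla m\odot\nabla m)-u\cdot\nabla u-\partial_t u .
$$
Each summand lies in $L_q(\Omega;\bR^3)$ for a.e.\ $t$: $\Delta u(t),\partial_t u(t)\in L_q$ since $(u,F,m)\in\bE_{1,\mu}(T)$, and $u\cdot\nabla u$, $\nabla\cdot(FF^{\sT})$, $\nabla\cdot(\nabla m\odot\nabla m)=\cB_q(m)m$ lie in $L_q$ by the mapping properties underlying the definition of $G$ and $\cB_q$ (this is where $X_{\gamma,\mu}\hookrightarrow C^1(\overline{\Omega};\bR^{15})$ and $u,F,m\in L_{p,\mu}((0,T);H^2_q)$ are used). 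Using $\mu_s P_H\Delta u=\mu_s\Delta u-(I-P_H)\mu_s\Delta u$ and $P_H\partial_t u=\partial_t u$, the first component of \eqref{abstract evolution} yields $P_H w=0$, hence $w=\nabla\pi$ for a unique (up to constants) $\pi$ with $\pi(t)\in\dot H^1_q(\Omega)$. Rewriting $w=\nabla\pi$ gives precisely the first line of \eqref{magneto sys 3}, while the remaining equations plus boundary/initial conditions are the other components of \eqref{abstract evolution} together with the definitions of $D(\cA_q),D(\cG_q),D(\cD_q)$. Since $w\in L_{p,\mu}((0,T);L_q)$ by the estimates above and $\|\nabla\pi(t)\|_{L_q}=\|w(t)\|_{L_q}$, we get $\pi\in L_{p,\mu}((0,T);\dot H^1_q(\Omega))$, as claimed. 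The only point requiring genuine care is the mapping property of the nonlinear terms into $L_{p,\mu}((0,T);L_q)$ — which is already needed to make $G$ and $\cB_q(m)h$ meaningful along the solution — so I do not expect a real obstacle here; the rest is bookkeeping with $P_H$ and the kernel identity $\ker P_H=\nabla\dot H^1_q(\Omega)$.
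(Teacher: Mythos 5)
Your argument is correct and follows essentially the same route as the paper: both directions reduce to the Helmholtz decomposition of the momentum residual, with (b)$\Rightarrow$(a) obtained by applying $P_H$ and (a)$\Rightarrow$(b) by recovering $\nabla\pi$ as the complementary part. The only cosmetic difference is that you absorb $\partial_t u$ into the residual and invoke $\ker P_H=\nabla\dot H^1_q(\Omega)$ directly, whereas the paper keeps $\partial_t u$ separate and produces $\pi$ explicitly as the solution $\psi_v$ of the weak Neumann problem characterizing $I-P_H$ — the resulting pressure is the same.
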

%%%%
\begin{proof}
(a)$\Rightarrow$(b): 
Suppose $z= (u,F,m)\in \bE_{1,\mu}(T) $  solves \eqref{abstract evolution} on  $[0,T]$.
Let
\begin{equation*}
\label{Def of v}
v=   \mu_s   \Delta u  - u \cdot \nabla u -  \nabla\cdot(\nabla m\odot \nabla m) + \nabla \cdot (F F^{\sT} ).  
\end{equation*}
Then $v\in L_{p,\mu}((0,T); L_q(\Omega;\bR^3))$.
For $t\in (0,T)$, let  $\nabla \psi_{v(t)} \in L_q(\Omega;\bR^3)$ be the unique solution of
$$
(\nabla \psi_{v(t)} | \nabla \phi )_\Omega =(v(t) | \nabla \phi )_\Omega ,\quad \forall \phi\in \dot{H}^1_{q'}(\Omega),
$$ 
where $(\cdot| \cdot)_\Omega $ is the inner product of $L_2(\Omega;\bR^3)$  and $q'$ is the H\"older dual of $q$. 
Then $P_H v(t) =v(t)-\nabla \psi_{v(t)}$ by the definition of the Helmholtz projection.
Let $\pi=\psi_v$.  Then  $\pi\in L_{p,\mu}((0,T); \dot{H}^1_q(\Omega)),$ 
and noting that $P_H\partial_t u= \partial_t u$, we conclude that
 $(u,F,m,\pi)$ is a solution of \eqref{magneto sys 3} in the regularity class $ \bE_{1,\mu}(T)\times L_{p,\mu}((0,T); \dot{H}^1_q(\Omega))$.
 \medskip\\
 \noindent
(b)$\Rightarrow$(a): 
Suppose $(u,F,m,\pi)\in \bE_{1,\mu}(T)\times L_{p,\mu}((0,T); \dot{H}^1_q(\Omega))$ solves \eqref{magneto sys 3}.
Applying $P_H$ to the equation  governing $u$  in \eqref{magneto sys 3}, it is an easy task to check that
$(u,F,m)\in \bE_{1,\mu}(T) $ solves \eqref{abstract evolution}.
\end{proof}
We are now ready for our main result on existence and uniqueness of solutions for 
system \eqref{magneto sys 2}, or equivalently,  system \eqref{magneto sys}.
%%%%%%
\goodbreak
\begin{theorem}
\label{Thm: existence and uniqueness}
Let  $p,q\in (1,\infty)$ and  $\mu \in \left(\frac{1}{2}+\frac{1}{p}+\frac{3}{2q},1 \right]$.
Suppose that
$$
z_0= (u_0,F_0,m_0)\in B^{2\mu-2/p}_{qp,\sigma}(\Omega;\bR^3)\times B^{2\mu-2/p}_{qp }(\Omega;\bM^3) \times B^{2\mu-2/p}_{qp}(\Omega;\bR^3)
$$
satisfies the compatibility conditions
$
(u_0, F_0, \partial_\nu m_0)=0 \text{  on } \partial\Omega.
$
Then there exists a unique solution
\begin{equation*}
(u ,F ,m ,\pi)\in \left[ H^1_{p,\mu}((0,T); X_0) \times L_{p,\mu}((0,T); X_1) \right]\times L_{p,\mu}((0,T); \dot{H}^1_q(\Omega))
\end{equation*}
of \eqref{magneto sys 3} for some $T=T(z_0)>0$. 
Each solution can be extended to a maximal existence interval $[0, T_+(z_0))$.
Moreover, $(z,\pi)=(u,F,m,\pi)$ enjoys the additional regularity
\begin{equation}
\label{time-regularity}
z\in C([0,T_+); X_{\gamma,\mu})\cap C^\omega ((0,T_+); X_1) \text{ and }   (z,\pi)\in   C^\omega ((0,T_+)\times \Omega ; \bR^{16}).
\end{equation}
If $|m_0|\equiv 1$, then the solution also satisfies
\begin{equation}
 |m(t)|\equiv 1,\quad t\in [0, T_+(z_0)).
 \end{equation}
\end{theorem}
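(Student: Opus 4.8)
The plan is to derive the theorem as a consequence of the two preceding propositions, supplemented by a separate argument for the pressure regularity and the pointwise constraint. First I would observe that the hypotheses on $z_0$ are merely a reformulation of the membership $z_0\in X_{\gamma,\mu}$: by the interpolation characterization \eqref{interpolation}, the stated Besov regularity of $(u_0,F_0,m_0)$ together with the compatibility conditions $(u_0,F_0,\partial_\nu m_0)=0$ on $\partial\Omega$ is equivalent to $z_0\in X_{\gamma,\mu}$. Here the restriction $\mu>\frac12+\frac1p+\frac{3}{2q}$ guarantees both the embedding $X_{\gamma,\mu}\hookrightarrow C^1(\overline\Omega;\bR^{15})$ needed for \eqref{analytic} and the inequality $2\mu-2/p-1/q>0$ needed for the trace $\partial_\nu m_0$ to be defined. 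Proposition~\ref{pro: abstract} then supplies a unique maximal solution $z=(u,F,m)\in\bE_{1,\mu}(T)$ of the abstract equation \eqref{abstract evolution}, together with the maximal existence interval $[0,T_+(z_0))$ and the time/space regularity recorded in \eqref{additiional-regularity-z}. Feeding this $z$ into the equivalence Proposition~\ref{pro: equivalence}, implication (a)$\Rightarrow$(b), produces a pressure $\pi\in L_{p,\mu}((0,T);\dot H^1_q(\Omega))$ so that $(u,F,m,\pi)$ solves \eqref{magneto sys 3} in the class asserted in \eqref{time-regularity}.

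For uniqueness I would note that the construction of $\pi$ in Proposition~\ref{pro: equivalence} is canonical: $\pi(t)$ is the potential $\psi_{v(t)}$ in the Helmholtz decomposition of $v(t)=\mu_s\Delta u-u\cdot\nabla u-\nabla\cdot(\nabla m\odot\nabla m)+\nabla\cdot(FF^{\sT})$, which is determined by $z$ up to an additive constant and is therefore unique as an element of $\dot H^1_q(\Omega)$. Thus the correspondence $z\leftrightarrow(u,F,m,\pi)$ is bijective, and the uniqueness of $z$ furnished by Proposition~\ref{pro: abstract} transfers to the full quadruple. The space-time analyticity of the pressure asserted in \eqref{time-regularity} is obtained by an elliptic-regularity argument: since $z$ is real-analytic on $(0,T_+)\times\Omega$ by \eqref{additiional-regularity-z}, so is $v$, being a polynomial expression in $z$ and its spatial derivatives, and the solution operator $v\mapsto\psi_v$ of the weak Neumann--Laplace problem maps analytic data to analytic potentials. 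Combined with the analyticity of $z$ this yields $(z,\pi)\in C^\omega((0,T_+)\times\Omega;\bR^{16})$.

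It remains to verify the constraint $|m|\equiv 1$. Assuming $|m_0|\equiv 1$, I would set $w=|m|^2-1$ and compute the scalar equation satisfied by $w$ directly from the $m$-equation in \eqref{magneto sys 3}. Using the scalar triple-product identity $m\cdot(\sM(m)\Delta m)=m\cdot(m\times\Delta m)=0$, which removes the $\beta$-term, together with the relation $\Delta|m|^2=2|\nabla m|^2+2\,m\cdot\Delta m$ already employed in \eqref{double cross product 2}, one finds
\begin{equation*}
\partial_t w + u\cdot\nabla w = \alpha\Delta w + 2\alpha|\nabla m|^2 w, \quad \partial_\nu w=0 \text{ on }\partial\Omega, \quad w(0)=0,
\end{equation*}
where the Neumann condition comes from $\partial_\nu w=2\,m\cdot\partial_\nu m=0$ and the initial condition from $|m_0|\equiv 1$. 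This is a homogeneous linear parabolic problem with smooth coefficients (by the space-time regularity of $u,m$ on $(0,T_+)\times\Omega$), so $w\equiv 0$ follows from uniqueness. Concretely I would test against $w$ and integrate over $\Omega$: the transport term vanishes because $\nabla\cdot u=0$ and $u=0$ on $\partial\Omega$, and the Neumann condition renders the diffusion term nonpositive, leaving $\frac{d}{dt}\|w\|_{L_2}^2\le 4\alpha\|\nabla m(t)\|_{L_\infty}^2\|w\|_{L_2}^2$; a Gr\"onwall argument then forces $w(t)\equiv 0$ on $[0,T_+)$.

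The bulk of the theorem is thus inherited from the two propositions; the genuinely new work lies in the last two steps. I expect the main obstacle to be the constraint preservation near $t=0$, since the energy identity for $w$ holds classically only on $(0,T_+)$: closing the Gr\"onwall estimate requires integrating from $\varepsilon$ to $t$ and letting $\varepsilon\downarrow 0$, matching the vanishing initial trace $w(0)=0$ in $X_{\gamma,\mu}$ against the interior smoothing and controlling $\|\nabla m\|_{L_\infty}$ uniformly on small time intervals. The pressure analyticity is a second, milder technical point, settled by the standard fact that the Helmholtz potential of an analytic field is analytic.
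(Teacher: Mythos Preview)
Your proposal is correct and follows essentially the same route as the paper: both reduce the first part to Propositions~\ref{pro: abstract} and~\ref{pro: equivalence}, and both establish the constraint $|m|\equiv 1$ by deriving the linear parabolic equation for $|m|^2-1$, testing against it, and applying Gr\"onwall. Your treatment is in fact slightly more careful than the paper's on two points it leaves implicit---the analyticity of the pressure and the justification of the energy identity down to $t=0$---and your Gr\"onwall bound correctly carries $\|\nabla m\|_{L_\infty}^2$ rather than the unsquared factor appearing (by an apparent typo) in the paper.
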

%%%%%%%

\begin{proof}
The assertions in the first part of the statement follow readily from Propositions~\ref{pro: abstract} and \ref{pro: equivalence}.
It then only remains to show that the condition $|m(t)|\equiv 1$ holds for every $t\in [0, T_+(z_0))$, provided 
$|m_0|\equiv 1$.

Suppose then that $|m_0|\equiv 1$.
Let $T\in (0,T_+(z_0))$ be fixed and set $\varphi=|m|^2-1$. 
We  note that
\begin{equation*}
\begin{aligned}
&m\in C^1((0,T); H^2_q(\Omega;\bR^3)) \\
&\varphi\in C([0,T]; B^{2\mu-2/p}_{qp}(\Omega ))\cap C^1 ((0,T); H^2_q(\Omega )). \\
\end{aligned}
\end{equation*}
Indeed, \eqref{time-regularity} implies that $m\in C([0,T]; B^{2\mu-2/p}_{qp}(\Omega;\bR^3 )) \cap C^1((0,T); H^2_q(\Omega;\bR^3))$. The condition $\mu \in \left(\frac{1}{2}+\frac{1}{p}+\frac{3}{2q},1 \right]$ guarantees that $B^{2\mu-2/p}_{qp}(\Omega )$ and $H^2_q(\Omega)$ are Banach algebras. The asserted regularity of $\varphi$ thus holds.
Taking the dot product of the equation
$$
\partial_t m + u \cdot\nabla m = \alpha(\Delta m +|\nabla m|^2 m)-\beta m \times \Delta m
$$
with $m$ and using the relations 
$
\partial_t |m|^2= 2 \partial_t m \cdot m, \ \Delta |m|^2= 2\Delta m\cdot m + 2|\nabla m|^2 %\quad \nabla |m|^2=2m\cdot \nabla m
$
results in
\begin{equation}
\label{eqn:phiequ}
\left\{\begin{aligned}
\partial_t \varphi  + u\cdot \nabla \varphi -   \alpha \Delta \varphi -   2\alpha  |\nabla m|^2 \varphi &= 0 &&\text{in}&&\Omega,\\
\partial_\nu \varphi  &=0   &&\text{on}&&\partial\Omega, \\
  \varphi (0) &=0. &&
\end{aligned}\right.
\end{equation}
Multiplying both sides of \eqref{eqn:phiequ} with $\varphi$ and integrating over $\Omega$ yields
\begin{equation*}
  \frac{d}{dt}\int_{\Omega} \frac{1}{2}\varphi^2\, dx+   \alpha \int_{\Omega}|\nabla \varphi|^2 \, dx
  =2\alpha\int_{\Omega}|\nabla m|^2 \varphi^2 \, dx,\quad t\in (0,T).
\end{equation*}
As $m\in C([0,T]; B^{2\mu-2p}_{qp}(\Omega;\bR^3))\hookrightarrow C([0,T]; C^1(\overline\Omega;\bR^3))$, we 
obtain the following integral inequality
\begin{equation*}
  \frac{d}{dt}\int_\Omega \varphi^2(t)\, dx\le 4\alpha\|\nabla m(t)\|^2_\infty \int_{\Omega}\varphi^2(t)\, dx,\quad t\in (0,T).
\end{equation*}
Applying the Gronwall inequality we get
\begin{equation*}
  \max_{0\le t\le T} \int_{\Omega}\varphi^2 (t)\, dx\le \exp\left( 4\alpha\int_{0}^{T} \|\nabla m(t)\|^2_\infty \, dx \right)\int_{\Omega} \varphi^2(0) \, dx=0. 
\end{equation*}
This implies $\varphi\equiv 0$ in $Q_T=[0,T] \times \Omega$. In other words, $|m|\equiv 1$ in $Q_T$.  
As this is true for every $T\in (0, T_+(z_0))$ we obtain that $|m(t)|\equiv 1$ for  any $t\in (0,T_+(z_0))$.
As \eqref{magneto sys 2} and \eqref{magneto sys} are equivalent, we have proved the assertions of the theorem.
\end{proof}

\begin{remark}
Let $p,q,\mu$ be as in Theorem~\ref{Thm: existence and uniqueness}.
The assertions of Theorem~\ref{Thm: existence and uniqueness},   with exception of the higher time regularity stated
in \eqref{time-regularity}, 
 still hold if we pose the nonhomogeneous boundary conditions
\begin{equation}
\label{boundary data}
(u, F, m)=(u_D, F_D, m_N)  \quad \text{on }\partial\Omega,
\end{equation}
where 
\begin{equation*}
\begin{aligned}
&u_D\in F^{1-1/2q}_{pq,\mu}((0,T);L_q(\partial\Omega;\bR^3)) \cap L_{p,\mu}((0,T);W^{2-1/q}_{q }(\partial\Omega;\bR^3)) \\
&
F_D\in F^{1-1/2q}_{pq,\mu}((0,T);L_q(\partial\Omega;\bM^3)) \cap L_{p,\mu}((0,T);W^{2-1/q}_{q }(\partial\Omega;\bM^3)) \\
&
m_N\in F^{1/2-1/2q}_{pq,\mu}((0,T);L_q(\partial\Omega;\bR^3)) \cap L_{p,\mu}((0,T);W^{1-1/q}_{q }(\partial\Omega;\bR^3))
\end{aligned}
\end{equation*}
and the initial data satisfy the compatibility conditions
$$
(u_D(0), F_D(0), m_N(0))=(u_0,F_0, \partial_\nu m_0) \quad \text{on }\partial\Omega.
$$
See \cite[Theorem~6.3.2]{PruSim16}.
Here $F^s_{pq,\mu}$ are the Triebel-Lizorkin spaces with temporal weight.
%Then it follows from the argument leading to \cite[Theorem~4.5]{MeySch12} that there exist extensions of $u_b$ and $F_b$, still denoted by $u_b$ and $F_b$, such that
%$$
%u_b\in L_{p,\mu}, \quad F_b\in \bE_{1,\mu}(0,T;\bM^3)
%$$
\end{remark}

%%%%%%%%%%%%%%%%%%%%%%%%%%%%%%%%%%%%%%%%
\section{Stability and asymptotic behavior}\label{Section:Stability and asymptotic behavior}

The last two sections are devoted to a discussion of the asymptotic behavior of solutions $(u,F,m,\pi)$ to \eqref{magneto sys}. 
In view of Proposition~\ref{pro: equivalence}, the pressure $\pi$ can be  obtained from $z=(u,F,m)$. 
For this reason, it suffices to restrict our attention to a solution $z=(u,F,m)$ of \eqref{abstract evolution}.

The $3$-dimensional subspace 
$$\cE_0:=\{0\}\times \{0_3\}\times\bR^3\;\;\text{of $X_1$}$$
is clearly contained in the set $\cE_1$ of equilibria of \eqref{abstract evolution},
where $0_3$ is the $3\times3$ matrix with zero entries.
 We refer to Remark~\ref{rem:4.3}(a) for more information on $\cE_1$.

At each $z_*=(0,0_3, m_*)\in 	\cE_0$, the linearization of \eqref{abstract evolution} is given by 
\begin{equation}
\label{Linearization}
\partial_t z + A_*  z =0  ,\quad z(0)=z_0,
\end{equation}
where $z=(u,F,m)$ and 
\begin{align*}
A_* z= (-\mu_s P_H \Delta u  ,  -\kappa \Delta F ,  (\beta\sM(m_*)-\alpha I_3) \Delta m  ).
%\begin{bmatrix}
%&-\mu_s P_H \Delta   - P_H \nabla \cdot (g)^{\sT} \\
%&(\nabla u)^{\sT} -\kappa \Delta g \\
%& (\sM(d_*)-I_3) \Delta d 
%\end{bmatrix}
\end{align*}
Since $\Omega$ is bounded, the spectrum of $A_*$   consists only of eigenvalues. 
Suppose that $A_*z=\lambda z$ for some $\lambda \in \bC$.
By elliptic regularity theory, we can assume that $z\in H^2_q(\Omega;\bC^{15})$
for $q\ge 2$.
Taking the inner product of $A_*z=\lambda z$  with $\overline z$, where $\overline z$ denotes the complex conjugate of $z$,
direct computations lead to 
\begin{align*}
{\rm Re}\,\lambda \left(\|u\|_2^2 + \|F\|_2^2 + \|m\|_2^2  \right)= \mu_s \|\nabla u\|_2^2 + \kappa \|\nabla F\|_2^2 +\alpha \|\nabla m\|_2^2,
\end{align*} 
which implies that ${\rm Re}\,\lambda \geq 0$. 
Here we have used the anti-symmetry of $\sM(m_*)$ to conclude that
$$
{\rm Re}\,( \sM(m_*) \Delta m | \overline{m})_\Omega=0.
$$
Indeed, as the entries of $\sM(m_*)$ are constant, we obtain 
$$
(\sM(m_*) \Delta m | \overline m)_\Omega =(\Delta (\sM(m_*) m) | \overline m)_\Omega=(\sM(m_*)m | \overline{\Delta m})_\Omega,
$$
where we set $z\cdot \overline{w}=z_j \overline{w_j}$ for $z,w\in\bC^3$.
The  anti-symmetry of $\sM(m_*)$ implies
$$
(\sM(m_*) \Delta m | \overline m)_\Omega = -(\Delta m | \sM(m_*)\overline{m})_\Omega =-\overline{(\sM(m_*)m | \overline{\Delta m}}_\Omega.
$$
This readily yields ${\rm Re}\, (\sM(m_*) \Delta m | \overline m)_\Omega=0$.
When ${\rm Re}\,\lambda = 0$, one concludes from the above that
$$
\|\nabla u\|_2=\|\nabla F\|_2=\|\nabla m\|_2=0.
$$
Combined with the boundary conditions, this shows that $z=(u,F,m)\in \cE_0 $.
Further, we infer that $A_* z=0$. Thus $\sigma(A_*)\cap i\bR=\{0\} $ and $N(A_*)=\cE_0$.

To show $\{0\}$ is a semi-simple eigenvalue, we will prove that $N(A_*)=N(A_*^2)$. Assume that $w=(v,f,h)\in N(A_*^2)$. Then there exists $z=(0,0_3, m)\in  N(A_*)$ such that $A_* w=z$. Then by the divergence theorem,   the boundary condition $\partial_\nu h=0$  and the fact that  $m$ as well as  $m_*$ are constant, 
\begin{align*}
\|z\|_2^2 = (A_* w | z) =  ( (\beta\sM(m_*) - \alpha I_3)\Delta h | m)  =0.
\end{align*}
We conclude that $z=0$ and thus $w\in N(A_*)$. This shows that $\{0\}$ is semi-simple. 
As $\cE_0$ is a linear space, we clearly have $T_{z_*} \cE_0 =N(A_*).$

If follows from \cite[Remark 2.2]{PrSiZa09}, see also \cite[Remarks 5.3.2]{PruSim16}, that all equilibria close to 
$z_*$ are contained in a manifold ${\mathcal M}$ of dimension $3= {\rm dim}(N(A_*))$,
where we used the fact that the center space $X^c$ coincides with $N(A_*)$ as $\{0\}$ is semi-simple. 
Since the dimension of $\cE_0$ is also 3, we conclude that there exists an open neighborhood $V_*\subset X_1$ of $z_*$ such that
${\mathcal M}\cap V_*=\cE_0\cap V_*$. Hence, the neighborhood $V_*$ contains no other equilibria than the elements of $\cE_0$,
that is, $V_*\cap \cE_0=V_*\cap \cE_1$. 

\medskip
We have, thus, shown that  $z_*$ is normally stable, see  \cite[Theorem~5.3.1]{PruSim16} for a definition.

\begin{theorem}
\label{Thm: Stability}
Let  $p,q\in (1,\infty)$ and   $\mu \in \left(\frac{1}{2}+\frac{1}{p}+\frac{3}{2q},1 \right]$.

Then each equilibrium  $z_*=(0,0_3, m_*) $ with $m_*\in \bS^2$ is stable in the topology of $X_{\gamma,\mu}$.
There  exists $\varepsilon>0$ such that any solution $ (u, F,m,\pi)$ of \eqref{magneto sys} 
with initial value  $z_0=(u_0,F_0,m_0)\in X_{\gamma,\mu}$ satisfying $\|z_0-z_*\|_{X_{\gamma,\mu}}\leq \varepsilon$  exists globally
and converges to some $z_\infty = (0,0_3,m_\infty) $ with $m_\infty\in \bS^2$ in 
the topology of  $X_{\gamma,1}$ 
at an exponential rate as $t\to \infty$.
\end{theorem}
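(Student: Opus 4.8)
The plan is to invoke the generalized principle of linearized stability for normally stable equilibria, \cite[Theorem~5.3.1]{PruSim16} (see also \cite[Remark~2.2]{PrSiZa09} and \cite[Remarks~5.3.2]{PruSim16}). All of its structural hypotheses are already available: system \eqref{magneto sys} has been recast as the quasilinear evolution equation \eqref{abstract evolution}; the operators $A_q(m)$ enjoy maximal $L_p$-regularity by \eqref{max-reg-Aq}; the data $A_q$ and $G$ depend analytically on the state by \eqref{analytic}; and in the discussion preceding the statement we have verified that $z_*=(0,0_3,m_*)$ is normally stable, i.e., $\sigma(A_*)\cap i\bR=\{0\}$, the eigenvalue $0$ is semi-simple with $N(A_*)=\cE_0$, the nonzero spectrum lies in the open right half-plane and is (by compactness of the resolvent) bounded away from $i\bR$, and the set of equilibria near $z_*$ is a $C^1$-manifold $\mathcal M$ with $T_{z_*}\mathcal M=N(A_*)$ which, by a dimension count, coincides with $\cE_0$ on a neighborhood $V_*$ of $z_*$.

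With this in hand, I would first read off from \cite[Theorem~5.3.1]{PruSim16} that $z_*$ is stable in the norm of $X_{\gamma,\mu}$ and that there exists $\varepsilon>0$ such that every solution $z=(u,F,m)$ of \eqref{abstract evolution} with $\|z_0-z_*\|_{X_{\gamma,\mu}}\le\varepsilon$ exists globally, stays in $V_*$, and converges at an exponential rate in the norm of $X_{\gamma,1}$ to some equilibrium $z_\infty\in\mathcal M\cap V_*=\cE_0\cap V_*$. Hence $z_\infty=(0,0_3,m_\infty)$ with $m_\infty\in\bR^3$ a constant.

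Next I would transfer the conclusion to \eqref{magneto sys}. By Proposition~\ref{pro: equivalence}, $z$ corresponds to a solution $(u,F,m,\pi)$ of \eqref{magneto sys 3}, with $\pi$ recovered from $z$ via the Helmholtz decomposition (and $\pi(t)$ then converging to a constant, since $z_\infty$ is constant). Since $(u,F,m,\pi)$ is in fact a solution of \eqref{magneto sys}, it satisfies $|m_0|\equiv 1$, so Theorem~\ref{Thm: existence and uniqueness} gives $|m(t)|\equiv 1$ on the maximal interval, which is now $[0,\infty)$. Finally, the hypothesis on $\mu$ forces $\tfrac12+\tfrac1p+\tfrac{3}{2q}<1$, whence the embedding $X_{\gamma,1}\hookrightarrow C(\overline\Omega;\bR^{15})$ holds; thus the $X_{\gamma,1}$-convergence $m(t)\to m_\infty$ is uniform on $\overline\Omega$, and letting $t\to\infty$ in $|m(t)|\equiv 1$ yields $|m_\infty|\equiv 1$. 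As $m_\infty$ is constant, $m_\infty\in\bS^2$, which is the desired conclusion.

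The substantive work --- the spectral analysis of $A_*$, the semi-simplicity of the eigenvalue $0$, and the identification of the local equilibrium manifold --- has already been carried out in the text preceding the statement, so that the remaining steps amount largely to bookkeeping within the abstract framework of \cite{PruSim16}. The one point that genuinely requires attention is the last one: ensuring that the constraint $|m|\equiv 1$ is inherited by the $X_{\gamma,1}$-limit, which is precisely why the embedding into continuous functions must be invoked.
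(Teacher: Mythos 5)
Your proposal follows essentially the same route as the paper: invoke the normally-stable theorem \cite[Theorem~5.3.1]{PruSim16} on the basis of the spectral analysis carried out before the statement, transfer back to \eqref{magneto sys} via Proposition~\ref{pro: equivalence} and the preservation of $|m|\equiv 1$, and conclude $m_\infty\in\bS^2$ in the limit. The only (minor) difference is that the paper additionally cites \cite[Proposition~5.1]{MaPrSi19} to cover initial data in the time-weighted class $X_{\gamma,\mu}$ with $\mu<1$ and convergence in $X_{\gamma,1}$, a refinement of \cite[Theorem~5.3.1]{PruSim16} that your argument uses implicitly.
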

\begin{proof}
Given an equilibrium $z_*=(0,0_3, m_*)\in 	\cE_0$, we infer from  \cite[Theorem~5.3.1]{PruSim16}
 and  \cite[Proposition~5.1]{MaPrSi19} 
 that there exists $\varepsilon>0$ such that any solution $z=(u, F,m )$ of \eqref{abstract evolution} with initial data 
$z_0=(u_0,F_0,m_0)\in   X_{\gamma, \mu}  $ satisfying  the conditions $|m_0|\equiv 1$ and $\|z_0-z_*\|_{X_{\gamma,\mu}}\leq \varepsilon$ exists globally  and converges 
at an exponential rate to some $z_\infty = (0,0_3,m_\infty ) $ with $m_\infty= $ constant, in the topology of
 $X_{\gamma, 1}$ as $t\to \infty$.
By Proposition~\ref{pro: equivalence}, 
we can determine a pressure $\pi$ such that $(z,\pi)$ solves 
 \eqref{magneto sys 3} on $\bR_+$. 
Furthermore, since $|m_0|\equiv 1$, we infer that $|m(t)|\equiv 1$ for all $t\ge 0$, 
which implies that $(z,\pi) $ solves \eqref{magneto sys} on $\bR_+$.  Finally, we conclude that $m_\infty\in \bS^2$.
\end{proof}
\goodbreak
%%%%%%%%%%%%%%%%%%%%%%%%%%%%%%%%%%%%%%%%
 \goodbreak
\section{Lyapunov functional and global solutions}\label{Section: global existence}
Let 
\begin{equation}
\label{Lyapunov functional}
\sE:=\sE(u,F,m):=\frac{1}{2}\int_\Omega \left( |u|^2 + |F|^2 + |\nabla m|^2 \right)\, dx.
\end{equation}
We show that the energy $\sE$ is dissipated.
\begin{proposition}
\label{pro: energy-dissipation}
Let $(u,F,m,\pi)$ be a solution of \eqref{magneto sys} with initial value $z_0$
satisfying the assertions of Theorem~\ref{Thm: existence and uniqueness}.
Then
\begin{equation*}
 \label{energy-dissipation-2}
  \begin{aligned}
    \frac{d}{dt}\sE (t)
    &=-\int_\Omega\left( \mu_s  |\nabla u(t,x)|^2 + \kappa | \nabla F(t,x) |^2  + \alpha |  \Delta m(t,x) + |\nabla m(t,x)|^2m(t,x)\,  |^2\right)\,dx \\
 \end{aligned}
\end{equation*}
for $t\in (0,T_+(z_0))$.
Moreover, $\sE$ is a strict Lyapunov functional for \eqref{magneto sys}.
\end{proposition}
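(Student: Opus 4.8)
The plan is to differentiate $\sE$ along a solution and test each of the three evolution equations in \eqref{magneto sys} against its natural multiplier. This is legitimate: by Theorem~\ref{Thm: existence and uniqueness} the solution is analytic on $(0,T_+(z_0))\times\Omega$, so $t\mapsto\sE(u(t),F(t),m(t))$ is $C^1$ on $(0,T_+(z_0))$ and every integration by parts below is justified. Writing $\frac{d}{dt}\sE=\int_\Omega (u\cdot\partial_t u+F:\partial_t F+\nabla m:\nabla\partial_t m)\,dx$, the first integrand is treated by multiplying the momentum balance by $u$: the convective term $u\cdot(u\cdot\nabla u)$ and the pressure term $u\cdot\nabla\pi$ integrate to zero since $\dive u=0$ and $u|_{\partial\Omega}=0$; the viscous term gives $-\mu_s\|\nabla u\|_2^2$; and the two stress terms are rewritten via \eqref{divergence-property} (together with $u|_{\partial\Omega}=0$) as $\int_\Omega(\nabla m\odot\nabla m):\nabla u\,dx-\int_\Omega FF^{\sT}:\nabla u\,dx$. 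Testing the $F$-equation in \eqref{magneto sys 2} against $F$ in the Frobenius inner product yields $-\kappa\|\nabla F\|_2^2$ from the dissipative term, zero from the transport term (because $F:(u\cdot\nabla F)=\tfrac12 u\cdot\nabla|F|^2$ and $\dive u=0$, $F|_{\partial\Omega}=0$), and $\int_\Omega FF^{\sT}:\nabla u\,dx$ from the stretching term $F:((\nabla u)^{\sT}F)$.

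For the magnetization I would integrate by parts once, using $\partial_\nu m=0$, to obtain $\int_\Omega\nabla m:\nabla\partial_t m\,dx=-\int_\Omega\Delta m\cdot\partial_t m\,dx$, and then substitute $\partial_t m=-u\cdot\nabla m+\alpha(\Delta m+|\nabla m|^2 m)-\beta\sM(m)\Delta m$. The term involving $\sM(m)\Delta m$ drops out pointwise by antisymmetry of $\sM(m)$ (equivalently $\Delta m\cdot(m\times\Delta m)=0$). Using $|m|\equiv1$, hence $\Delta m\cdot m=-|\nabla m|^2$, the term $-\alpha\int_\Omega\Delta m\cdot(\Delta m+|\nabla m|^2 m)\,dx$ completes the square to $-\alpha\int_\Omega|\Delta m+|\nabla m|^2 m|^2\,dx$. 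The remaining convective term $\int_\Omega\Delta m\cdot(u\cdot\nabla m)\,dx$ is integrated by parts in space: the boundary contribution vanishes since $\partial_\nu m=0$, and expanding $\partial_i(u_j\partial_j m_k)$ produces exactly $-\int_\Omega(\nabla m\odot\nabla m):\nabla u\,dx$ plus $-\tfrac12\int_\Omega u\cdot\nabla|\nabla m|^2\,dx=\tfrac12\int_\Omega(\dive u)|\nabla m|^2\,dx=0$.

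Adding the three contributions, the two copies of $\int_\Omega FF^{\sT}:\nabla u\,dx$ cancel, the magnetic stress $\int_\Omega(\nabla m\odot\nabla m):\nabla u\,dx$ cancels against the transport term, and the claimed dissipation identity follows. Since $\mu_s,\kappa,\alpha>0$, this already gives $\frac{d}{dt}\sE\le0$. For the strict Lyapunov property, suppose $\frac{d}{dt}\sE(t_0)=0$ for some $t_0\in(0,T_+(z_0))$; as the three integrands are nonnegative and continuous, $\nabla u(t_0)\equiv0$, $\nabla F(t_0)\equiv0$ and $\Delta m(t_0)+|\nabla m(t_0)|^2 m(t_0)\equiv0$ on $\Omega$. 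Because $\Omega$ is connected and $u,F$ vanish on $\partial\Omega$, this forces $u(t_0)\equiv0$ and $F(t_0)\equiv0$; moreover $\Delta m(t_0)=-|\nabla m(t_0)|^2 m(t_0)$ is parallel to $m(t_0)$, so $m(t_0)\times\Delta m(t_0)=0$ and, by \eqref{double cross product 2}, the right-hand side of the $m$-equation vanishes at $t_0$. Choosing the pressure as in Proposition~\ref{pro: equivalence} (one checks $\pi(t_0)=-\tfrac12|\nabla m(t_0)|^2$ works, up to an additive constant), all time derivatives in \eqref{magneto sys} vanish at $t_0$, i.e. $(u(t_0),F(t_0),m(t_0),\pi(t_0))$ is an equilibrium. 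Hence $\sE$ is a strict Lyapunov functional for \eqref{magneto sys} in the sense of \cite{PruSim16}.

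I expect the only genuinely delicate point to be the cancellation between the Ericksen-type stress $\nabla\cdot(\nabla m\odot\nabla m)$ in the momentum balance and the convective term $u\cdot\nabla m$ in the Landau--Lifshitz--Gilbert equation: this is the single step where one must integrate by parts in the correct variable and invoke both $\partial_\nu m=0$ and ($\dive u=0$, $u|_{\partial\Omega}=0$) at once, whereas every remaining term reduces to a routine test-and-integrate-by-parts computation once the regularity furnished by Theorem~\ref{Thm: existence and uniqueness} is in hand.
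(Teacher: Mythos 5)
Your proof of the dissipation identity is correct and follows essentially the same route as the paper: differentiate $\sE$, test each equation against its natural multiplier, and let the elastic stress $\int_\Omega FF^{\sT}:\nabla u\,dx$ and the Ericksen stress cancel against the stretching and convection terms. The only cosmetic difference is in how the latter cancellation is organized: you integrate $\int_\Omega \Delta m\cdot(u\cdot\nabla m)\,dx$ by parts to recover $-\int_\Omega(\nabla m\odot\nabla m):\nabla u\,dx$, whereas the paper uses the pointwise identity $\nabla\cdot(\nabla m\odot\nabla m)=\nabla m\,\Delta m+\tfrac12\nabla|\nabla m|^2$ to convert the stress term directly into $-(u\cdot\nabla m)\cdot\Delta m$ and cancel it without a further integration by parts; both are valid, and your completion of the square via $m\cdot\Delta m=-|\nabla m|^2$ reproduces the paper's use of $(\partial_t m+u\cdot\nabla m)\cdot m=0$. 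The genuine divergence is in the strictness argument. The paper deduces from $\Delta m(t_0)+|\nabla m(t_0)|^2m(t_0)=0$, $|m(t_0)|\equiv1$, $\partial_\nu m(t_0)=0$ that $m(t_0)$ is \emph{constant}, by citing the rigidity lemma \cite[Lemma 5.2]{HNPS14} (see also \cite[Lemma~12.2.4]{PruSim16}), so that $z(t_0)\in\cE_0$. You avoid that lemma entirely and instead verify directly that the vector field vanishes at $z(t_0)$ (noting that $\nabla\cdot(\nabla m\odot\nabla m)=\tfrac12\nabla|\nabla m|^2$ is a gradient once $\Delta m\parallel m$, hence is absorbed by the pressure). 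This is more elementary and does establish that $z(t_0)$ is an equilibrium, which is all the strict Lyapunov property requires; but it buys less, since it leaves open whether non-constant Neumann harmonic maps could furnish additional equilibria. The paper needs the stronger conclusion to obtain the characterization \eqref{equilibria} of $\cE$ and to place the $\omega$-limit set inside $\{0\}\times\{0_3\}\times\bS^2$ in Theorem~\ref{Thm: global existence}, so if you adopt your route you must still invoke the rigidity lemma at that later stage. One minor point: interior space-time analyticity alone does not justify differentiating under the integral over $\Omega$; the clean justification is the regularity $z\in C^1((0,T_+);X_1)$ furnished by \eqref{time-regularity}, which is what the paper uses.
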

\begin{proof}
Let  $z_0$ be an initial value  satisfying the assumptions of Theorem~\ref{Thm: existence and uniqueness}.
Then \eqref{magneto sys}  admits a unique solution $(u,F,m,\pi)$ in the regularity class  stated in the Theorem.
In particular, $z=(u,F,m)$ enjoys the regularity property
$$z\in C([0,T_+); X_{\gamma,\mu})\cap C^1 ((0,T_+);X_1),$$
with $T_+=T_+(z_0)$.
In the following, we suppress the time variable $t\in (0, T_+)$.
A straightforward computation, using the boundary condition $\partial_\nu m=0$, yields 
\begin{equation*}
\frac{d}{dt} \sE
=    \int_\Omega \left( \partial_t u \cdot u  + \partial_t F : F -\partial_t m \cdot \Delta m \right)\, dx. \\
\end{equation*}
We have
\begin{equation}
\label{u-derivative}
\begin{aligned}
  &\int_\Omega  \partial_t u \cdot u \,dx \\
 &\hspace{.5cm} =\int_\Omega \left[ \mu_s \Delta u - u \cdot \nabla u -\nabla \pi
-  \nabla\cdot (\nabla m\odot \nabla m) + \nabla \cdot (F F^{\sT} ) \right]\cdot u \, dx  \\
&\hspace{.5cm}=\int_\Omega  \left( - \mu_s |\nabla u|^2 - (u\cdot \nabla m)\cdot \Delta m  - FF^{\sT}: \nabla u\right)\, dx,
\end{aligned}
\end{equation}
where we used $\nabla\cdot u=0$, the boundary condition $u=0$ on $\partial\Omega$,  \eqref{divergence-property}, 
and the relations
\begin{equation*}
\label{odot term}
\nabla \cdot(\nabla m \odot \nabla m)=    \nabla m\,  \Delta m +\frac{1}{2} \nabla (|\nabla m|^2), 
\quad (\nabla m\,\Delta m)\cdot u = (u\cdot \nabla m)\cdot \Delta m.
\end{equation*}
Moreover, 
\begin{equation}
\label{F-derivative}
\begin{aligned}
 \int_\Omega  \partial_t F : F \, dx 
&= \int_\Omega \left[   (\nabla u)^{\sT}  F -  u \cdot\nabla F+\kappa \Delta F \right] : F \, dx  \\
&=\int_\Omega \left(FF^{\sT}: \nabla u -\kappa |\nabla F|^2 \right)\,dx,
\end{aligned}
\end{equation}
where we employed the condition $\nabla \cdot u=0$, the boundary condition $F=0$ on $\partial\Omega$, 
and the  relations 
$$  (\nabla u)^{\sT} F :F  = F  F^{\sT}  : \nabla u ,\quad  2 (u \cdot\nabla F) : F =u \cdot\nabla |F|^2.
$$
Observing that
$
(\partial_t m + u\cdot \nabla m)\cdot m=0,
$
we obtain 
\begin{equation}
\label{m-derivative}
\begin{aligned}
& \int_\Omega (\partial_t m  + u\cdot \nabla m )\cdot \Delta m\, dx  \\
&\quad =\int_\Omega (\partial_t m  + u\cdot \nabla m )\cdot  (\Delta m +| \nabla m|^2 m)\, dx   \\
&\quad  =\int_\Omega (\alpha(\Delta m +|\nabla m|^2 m) - \beta\, m \times \Delta m)\cdot  (\Delta m +| \nabla m|^2 m)\, dx \\
&\quad = \alpha \int_\Omega | \Delta m +|\nabla m|^2m |^2\, dx \\
\end{aligned}
\end{equation}
as $m\times \Delta m$ is perpendicular to both $m$ and $\Delta m$.
Combining the results in \eqref{u-derivative}--\eqref{m-derivative}
readily yields the assertion.

\medskip
Hence, $\sE$ is non-increasing along solutions and, thus, is a Lyapunov functional. 
If, for any time $t\in I:= (t_1,t_2)\subset (0, T_+(z_0))$ with some $0\leq t_1<t_2$,  $\frac{d}{dt} \sE(t)=0$,  then 
$$\|\nabla u(t)\|_2=\|\nabla F(t)\|_2=\| \Delta m(t) + |\nabla m(t)|^2 m(t)\|_2=0.$$ 
Combining with the boundary conditions, we infer that
\begin{equation*}
%\label{equilibrium equation 1}
u(t)=0 , \quad F(t)=0_3, \quad t\in I.
\end{equation*}
This readily yields $(\partial_t u(t), \partial_t F(t)) = (0,0_3)$ for all $t\in I$.
Moreover, the condition $\|\Delta m(t) + |\nabla m(t)|^2 m(t)\|_2=0 $  implies that
\begin{equation}
\label{parallel}
\Delta m(t) + |\nabla m(t)|^2 m(t)  = 0 \quad \text{in } \Omega ,\quad t\in I.
\end{equation}
Taking the cross product of both sides of \eqref{parallel} by $m(t)$ yields
$$
m(t) \times \Delta m(t)= -|\nabla m(t)|^2 m(t) \times m(t)=0  \quad \text{in } \Omega.
$$
Therefore,
$ 
\partial_t m(t) = 0 
$ 
for all $t\in I$. 
Hence, $(\partial_t u(t),\partial_t F(t), \partial_t m(t))=(0,0_3,0)$ for all $t\in I$, and this means that
the system is at equilibrium for $t\in I$.
To sum up, we have proved that
$\sE: X_{\gamma,\mu}\to \bR$ is a strict Lyapunov functional for \eqref{magneto sys}.
\end{proof}
\noindent
The arguments above additionally yield a characterization of the set of equilibria of \eqref{magneto sys}.
%%%%%%%%%%%%%%%%
\begin{corollary}
\label{cor:equilibria}
The set of equilibria of \eqref{magneto sys} is given by 
\begin{equation}
\label{equilibria}
\cE=\{(0,0_3,m_*,\pi_*)\},
\end{equation}
where $m_* \in H^2_q(\Omega) $ solves the constrained nonlinear elliptic problem
\begin{equation}
\label{equilibirum-equation-2}
\left\{\begin{aligned}
\Delta m_* + |\nabla m_*|^2 m_* &= 0 &&\text{in}&&\Omega,\\
  |m_*| &\equiv 1  &&\text{in}&&\Omega,\\
\partial_\nu m_*  &=0   &&\text{on}&&\partial\Omega ,
\end{aligned}\right.
\end{equation}
and $\pi_*=-\frac{1}{2}|\nabla m_*|^2 +C$
for some constant $C$.
\end{corollary}
%%%%%%%%%%%%%%%
\begin{proof}
We have already shown in the proof of Proposition~\eqref{pro: energy-dissipation} 
that any equilibrium of \eqref{magneto sys} is given by $(0,0_3,m_*)$, where
$m_*$ solves~\eqref{equilibirum-equation-2}.
Hence, at equilibrium, we are left with the relation
$\nabla \pi_* = -\nabla \cdot (\nabla m_* \otimes \nabla m_*).$
We have
\begin{equation*}
\begin{aligned}
[-\nabla \cdot (\nabla m_* \otimes \nabla m_*)]_i &=- \partial_i  m_*\cdot \Delta m_* - (\partial_i \partial_j m_*)\cdot \partial_j m_* \\
&=  \partial_i  m_*\cdot  m_* |\nabla m_*|^2 - \frac{1}{2} \partial_i |\nabla m_*|^2  
= - \frac{1}{2} \partial_i |\nabla m_*|^2,
\end{aligned}
\end{equation*}
where we used the relations $\Delta m_* = -|\nabla m_*|^2 m_*$ and  $\partial_i m_*\cdot m_*=\frac{1}{2} \partial_i |m_*|^2=0$.
Hence, $\nabla \pi_*=-\frac{1}{2} \nabla |\nabla m_*|^2$ and the assertion for $\pi_*$ follows.
\end{proof}

\begin{remark}
\label{rem:4.3}
(a) We note that for solutions of system~\eqref{magneto sys 3}, that is, 
in case the condition $|m|\equiv 1$ is dropped in~\eqref{magneto sys 2}, we can only conclude that
\begin{equation*}
 \label{energy-dissipation-3}
  \begin{aligned}
    \quad \frac{d}{dt}\sE
   = -  \int_\Omega  \left( \mu_s  |\nabla u|^2 + \kappa | \nabla F |^2  + \alpha (  |\Delta m|^2 + |\nabla m|^2(m\cdot \Delta m))\right)\,dx.
  \end{aligned} 
\end{equation*}
As the term $m\cdot \Delta m$ does not have a sign,
we can no longer derive the characterization \eqref{equilibria} for the set  of equilibria, $\cE_1$, of ~\eqref{magneto sys 3}, 
respectively \eqref{abstract evolution}.
However, as shown in Section~\ref{Section:Stability and asymptotic behavior}, we can conclude that for every $z_*\in\cE_0$ there exists a neighborhood $V_*$ in $X_1$ such that
$\cE_1\cap V_* = \cE_0\cap V_*$.
\medskip\\
(b) 
It is claimed in \cite[Lemma 5.2]{HNPS14}, see also \cite[Lemma 12.2.4]{PruSim16},
that the nonlinear problem~\eqref{equilibirum-equation-2}
admits only constant solutions $m_*\in\bS^2$.
However, this assertion is not correct in the form stated, as the following example shows:
Let $\Omega=\{x\in\bR^3: 0<r_1<|x|<r_2\}$ and $m_*:\Omega \to\bS^2$ be defined by $m_*(x)=x/|x|.$
Then $m_*$ is a (non-constant) solution of ~\eqref{equilibirum-equation-2}.
\end{remark}

\goodbreak
\begin{theorem}\label{Thm: global existence}
Let $p,q,\mu$, $z_0$ and $ T_+(z_0)$  be as in Theorem~\ref{Thm: existence and uniqueness}.
Suppose that the solution $(u,F,m,\pi)$ of \eqref{magneto sys} satisfies
$$
z=(u,F,m)\in BC([\delta, T_+(z_0)); X_{\gamma,\bar \mu})
$$
for some $\delta\in (0, T_+(z_0))$ and   $\bar \mu \in (\mu, 1].$ 
Then $z$ exists globally and 
$ 
{\rm dist}(u(t),\cE)\to 0 
$ 
in $X_{\gamma,1}$ as $t\to \infty$, where $\cE$ is the set of equilibria of \eqref{magneto sys}.
%converges   to some $(0,0_3,m_*)$ with $m_*\in \bS^2$ in $X_{\gamma,1}$ at an exponential rate as $t\to \infty$. 
\end{theorem}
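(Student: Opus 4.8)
The plan is to run the standard argument for quasilinear parabolic problems that possess a strict Lyapunov functional whose equilibria are normally stable, combined with a compactness argument that turns the hypothesis $z\in BC([\delta,T_+(z_0));X_{\gamma,\bar\mu})$ into relative compactness of the orbit in $X_{\gamma,\mu}$. Concretely, I would use Propositions~\ref{pro: abstract}, \ref{pro: equivalence} and \ref{pro: energy-dissipation}, the structural information on the equilibrium set near $\cE_0$ obtained in Section~\ref{Section:Stability and asymptotic behavior}, the Liouville-type result \cite[Lemma~5.2]{HNPS14}, and the abstract theorems \cite[Theorem~5.3.1]{PruSim16} and \cite[Proposition~5.1]{MaPrSi19} --- exactly the tools already used in the proof of Theorem~\ref{Thm: Stability}.

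First I would establish global existence. Since $\bar\mu>\mu$ and $\Omega$ is bounded, the embedding $X_{\gamma,\bar\mu}\hookrightarrow X_{\gamma,\mu}$ is compact; by the characterization \eqref{interpolation} this is just the compact Besov embedding $B^{2\bar\mu-2/p}_{qp}(\Omega)\hookrightarrow B^{2\mu-2/p}_{qp}(\Omega)$. Hence the orbit $\mathcal O:=\{z(t):t\in[\delta,T_+(z_0))\}$ is relatively compact in $X_{\gamma,\mu}$. If $T_+(z_0)<\infty$, take $t_n\uparrow T_+(z_0)$; a subsequence of $z(t_n)$ converges in $X_{\gamma,\mu}$, and using that the existence time in Proposition~\ref{pro: abstract} can be chosen uniformly for initial data in the compact set $\overline{\mathcal O}$, together with uniqueness and continuous dependence on the data, one shows that this limit is independent of the sequence. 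Thus $\lim_{t\to T_+(z_0)}z(t)$ exists in $X_{\gamma,\mu}$, contradicting alternative~(ii) of Proposition~\ref{pro: abstract}; therefore $T_+(z_0)=\infty$.

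Next I would identify the $\omega$-limit set. As the orbit is relatively compact in $X_{\gamma,\mu}$, the $\omega$-limit set $\omega(z_0)$ is nonempty, compact, connected, invariant under the semiflow, and $\operatorname{dist}_{X_{\gamma,\mu}}(z(t),\omega(z_0))\to0$. By Proposition~\ref{pro: energy-dissipation}, $\sE\ge0$ is a strict Lyapunov functional and $t\mapsto\sE(z(t))$ is non-increasing, hence convergent; continuity of $\sE$ on $X_{\gamma,\mu}$, invariance of $\omega(z_0)$ and strictness of $\sE$ then force every element of $\omega(z_0)$ to be an equilibrium of \eqref{abstract evolution}. Since $z\mapsto|m|^2-1$ is continuous on $X_{\gamma,\mu}$ and $|m(t)|\equiv1$ for all $t$ by Theorem~\ref{Thm: existence and uniqueness}, every $(u_*,F_*,m_*)\in\omega(z_0)$ satisfies $|m_*|\equiv1$; the computation in Proposition~\ref{pro: energy-dissipation} then gives $u_*=0$, $F_*=0_3$, $\Delta m_*+|\nabla m_*|^2m_*=0$ with $\partial_\nu m_*=0$, and \cite[Lemma~5.2]{HNPS14} yields $m_*=\mathrm{const}$. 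Hence $\omega(z_0)\subset\{(0,0_3,m_*):m_*\in\bS^2\}\subset\cE_0$.

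Finally I would upgrade this to convergence to a single equilibrium. Fix $z_*=(0,0_3,m_*)\in\omega(z_0)$ with $m_*\in\bS^2$; by Section~\ref{Section:Stability and asymptotic behavior}, $z_*$ is normally stable and there is a neighborhood $V_*\subset X_1$ with $\cE_1\cap V_*=\cE_0\cap V_*$. Since $\operatorname{dist}_{X_{\gamma,\mu}}(z(t),\omega(z_0))\to0$, there is $t_0\ge\delta$ with $\|z(t_0)-z_*\|_{X_{\gamma,\mu}}$ as small as the abstract theorem requires; applying \cite[Theorem~5.3.1]{PruSim16} with initial time $t_0$ and then \cite[Proposition~5.1]{MaPrSi19} to pass from $X_{\gamma,\mu}$ to $X_{\gamma,1}$ (as in the proof of Theorem~\ref{Thm: Stability}), the solution converges at an exponential rate in $X_{\gamma,1}$ to some $z_\infty\in\cE_1\cap V_*=\cE_0\cap V_*$, i.e.\ $z_\infty=(0,0_3,m_\infty)$ with $m_\infty$ constant and, since $|m_\infty|=\lim_{t\to\infty}|m(t)|=1$, with $m_\infty\in\bS^2$; recovering the pressure via Proposition~\ref{pro: equivalence} then completes the argument. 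I expect the main obstacle to be Step~1 --- rigorously excluding alternative~(ii) of Proposition~\ref{pro: abstract} from relative compactness of $\mathcal O$ and the uniform local existence time --- and, to a lesser extent, checking that the neighborhood in which \cite[Theorem~5.3.1]{PruSim16} operates is compatible with the $X_{\gamma,\mu}$-topology in which $\operatorname{dist}(z(t),\omega(z_0))\to0$ is controlled, so that Step~3 is genuinely triggered.
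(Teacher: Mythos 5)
Your proposal is correct and follows essentially the same route as the paper: compactness of the orbit from the gap $\bar\mu>\mu$, the strict Lyapunov functional $\sE$ to locate $\omega(z_0)$ in $\cE_0$, and normal stability to upgrade to exponential convergence of a single limit. The paper simply invokes \cite[Theorems 5.7.1 and 5.7.2]{PruSim16} for global existence, relative compactness in $X_{\gamma,1}$, and the structure of $\omega(z_0)$, whereas you reconstruct the content of those abstract results by hand; the substance is identical.
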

\begin{proof}
Given any initial value $z_0$, we define the $\omega$-limit set of \eqref{abstract evolution}  as
$$
\omega(z_0):=\{w\in X_{\gamma,\mu}: \exists t_n\to \infty \text{ s.t. } \| z(t_n)-w\|_{X_{\gamma,1}}=0 \text{ as }n\to \infty\}.
$$
\cite[Theorem 5.7.1]{PruSim16} implies that $z(\cdot) $ exists globally and the orbit $\{z(t)\}_{t\geq \delta}$ is relatively compact in $X_{\gamma,1}$.
By \cite[Theorem~5.7.2]{PruSim16}, $\omega(z_0)$ is nonempty, compact and $\omega(z_0)\subset \cE$.
Further,   we can infer that $ 
{\rm dist}(z(t),\cE)\to 0 
$ 
in $X_{\gamma,1}$ as $t\to \infty$.
\end{proof}
%%%%%%%%%%%%%%%%%%%%%%%%%%%%%%%%%%%%%%%%

%%%%%%%%%%%%%%%%%%%%%%%%%%%%%%%%%%%%%%%%

%%%%%%%%%%%%%%%%%%%%%%%%%%%%%%%%%%%%%%%%

%%%%%%%%%%%%%%%%%%%%%%%%%%%%%%%%%%%%%%%%%
\section*{Acknowledgements}
The first author would like to thank Professor Changyou Wang for suggesting this problem and many helpful discussions. 
%%%%%%%%%%%%%%%%%%%%%%%%%%%%%%%%%%%%%%%%%

 \end{document}